\newtheorem{theorem}{Theorem}[section]
\newtheorem{cor}[theorem]{Corollary}
\newtheorem{lemma}[theorem]{Lemma}
\newtheorem{prop}[theorem]{Proposition}
\newtheorem{example}{Example}
\newtheorem{remark}{Remark}
\numberwithin{equation}{section}
\numberwithin{figure}{section}
\numberwithin{example}{section}
\numberwithin{remark}{section}
\newcommand{\RR}{\mathbb{R}}
\newcommand{\CC}{\mathbb{C}}
\newcommand{\NN}{\mathbb{N}}
\newcommand{\Om}{\Omega}
\newcommand{\ds}{\displaystyle}
\newcommand{\p}{\partial}
\newcommand{\pd}[2]{\frac {\p #1}{\p #2}}
\newcommand{\eqnref}[1]{(\ref {#1})}
\renewcommand{\qed}{\hfill $\Box$ \medskip}
\newcommand{\beq}{\begin{equation}}
\newcommand{\eeq}{\end{equation}}
\def\ep{\varepsilon}
\newcommand{\SingleOmega}{\mathcal{S}_{\partial\Omega}}
\newcommand{\KstarOmega}{\mathcal{K}_{\partial\Omega}^{*}}
\newcommand{\Kcal}{\mathcal{K}}
\newcommand{\Scal}{\mathcal{S}}
\newcommand{\LtwobdOmega}{L^2(\partial\Omega)}
\begin{document}

\title{Geometric series expansion of the Neumann--Poincar\'{e} operator: application to composite materials\thanks{EC acknowledges support of the U.S. National Science Foundation through
grant DMS-1715680. 
MK and ML are supported by the Basic Science Research Program through the National Research Foundation of Korea (NRF) funded by the Ministry of Education (NRF-2019R1A6A1A10073887).
}}

\author{Elena Cherkaev\thanks{Department of Mathematics, University of Utah, Salt Lake City UT 84112, USA (elena@math.utah.edu)}
\and 
Minwoo Kim\thanks{School of Electrical Engineering, Korea Advanced Institute of Science and Technology, Daejeon 34141, Republic of Korea (epsilon4b@kaist.ac.kr)}  
\and
Mikyoung Lim\thanks{Department of Mathematical Sciences,
Korea Advanced Institute of Science and Technology, Daejeon 34141, Republic of Korea (mklim@kaist.ac.kr)}}

\date{\today}
\maketitle

\begin{abstract}
The Neumann--Poincar\'{e} operator, a singular integral operator on the boundary of a domain, naturally appears when one solves a conductivity transmission problem via the boundary integral formulation. Recently, a series expression of the Neumann--Poincar\'{e} operator was developed in two dimensions based on geometric function theory \cite{Jung:2018:SSM}. In this paper, we investigate geometric properties of composite materials by using this series expansion. In particular, we obtain explicit formulas for the polarization tensor and the effective conductivity for an inclusion or a periodic array of inclusions of arbitrary shape with extremal conductivity, in terms of the associated exterior conformal mapping.  Also, we observe by numerical computations that the spectrum of the Neumann--Poincar\'{e} operator has a monotonic behavior with respect to the shape deformation of the inclusion.  
Additionally, we derive inequality relations of the coefficients of the Riemann mapping of an arbitrary  Lipschitz domain by using the properties of the polarization tensor corresponding to the domain. 
\end{abstract}

\noindent {\footnotesize {\bf AMS subject classifications.} {35J05; 30C35; 35B27; 45P05} }

\noindent {\footnotesize {\bf Key words.} 
{The Neumann--Poincar\'{e} operator; Conductivity transmission problem; Riemann mapping; Homogenization; Spectrum; Polarization tensor}}


\section{Introduction}
Various interesting wave phenomena of composite materials can be modeled as transmission problems in partial differential equations. The Neumann--Poincar\'{e} (NP) operator naturally appears when one solves a conductivity transmission problem via the boundary integral formulation. One can then obtain essential properties of composites  by analyzing the NP operator. 
For a simply connected bounded Lipschitz domain $\Omega\subset\RR^2$ and a density function $\varphi\in L^2(\partial\Omega)$, the NP operator is defined by 
\begin{equation}\label{eqn:Kstar}
	\KstarOmega[\varphi](x)=p.v.\frac{1}{2\pi}\int_{\partial\Omega}\frac{\left<x-y,\nu_x\right>}{|x-y|^2}\varphi(y)\,d\sigma(y).
\end{equation}
Here, $p.v.$ denotes the Cauchy principal value, and $\nu_x$ is the unit outward normal vector at $x\in\partial\Omega$. In higher dimensions, the NP operator is defined as a boundary integral operator in a similar manner (see, e.g, \cite{Verchota:1984:LPR}).

Recently, a series expression of the NP operator was developed in two dimensions based on geometric function theory \cite{Jung:2018:SSM}; see subsections \ref{subsec:Faber} and \ref{subsec:series} for details. This was successfully applied to the study of inclusion problems \cite{Choi:2020:ASR:preprint,Choi:2018:GME:preprint}, and the decay properties of eigenvalues of the NP operator \cite{Jung:2020:DEE}.
In the present paper, based on the geometric series expansion of the NP operator, we investigate geometric properties of composite materials by analyzing the interrelation among the interior or exterior conformal mappings, the NP operator, the polarization tensor, and the effective conductivity, where each term is associated with the geometry of an inclusion.
 
For the conductivity transmission problem with an inclusion, one can find the solution using the  ansatz
$u(x) = H(x)+\SingleOmega[\varphi](x)$, where $\SingleOmega$ indicates the single-layer potential associated with the fundamental solution to the Laplacian, and $\varphi$ involves the inversion of $\lambda I - \KstarOmega$ for some constant $\lambda$. 
This boundary integral formulation provides us the multipole expansion of the scattered field, whose coefficient of the leading term is expressed in terms of the so-called P\'{o}lya--Szeg\"{o} polarization tensor (PT), which may be defined in terms of the NP operator; we refer the reader to books \cite{Ammari:2013:MSM:book,Ammari:2004:RSI:book} and the references therein for detailed results on the PT based on the layer potential technique.

 A similar integral formula holds for a periodic interface problem, where the layer potential is now defined in terms of the periodic Green's function. 
The determination of the effective property of composite materials is one of the classical topics in 
physics \cite{Sangani:1990:CNC,Jikov:1994:HDO:book,Jeffrey:1973:CTR,Garboczi:1996:ICO}; we refer the reader to \cite{Bieberbach:1916:KDP:book} and to more references therein. 
The literature most closely related to our result is \cite{Ammari:2005:BLT} (see also \cite{Ammari:2006:ePE}), in which the effective electrical conductivity of a two-phase medium was derived using the following integral formulation:
\beq\label{effec:PT}
\sigma^*=\sigma I + fM\left(I-\frac{f}{2\sigma}M\right)^{-1}+O(f^3),
\eeq
where $f$ is the volume fraction of the inclusion, and $M$ is the PT corresponding to the inclusion and the conductivities of the matrix, $\sigma$, and the inclusion.

As one of the main results, we obtain an explicit expression of the trace of the polarization tensor in terms of the exterior conformal mapping for a simply connected, bounded Lipschitz domain with extreme (zero or infinity) or near-extreme conductivity. As an application, we find upper and lower bounds of the trace of the polarization tensor as expressed by the diameter of the domain. 
Then, using \eqnref{effec:PT}, we derive an explicit asymptotic formula for the effective conductivity of cylindrical periodic composites with inclusions with extreme conductivity, in terms only of the logarithmic capacity and the coefficient of order $-1$ of the associated exterior conformal mapping. 
We show that the asymptotic formula of the effective conductivity of a regular $n$-gon converges monotonically to that of a disk with unit area as $n$ goes to infinity.

Additionally, we derive inequality relations between coefficients of the Riemann mapping.
We remark that such inequalities are of interest in univalent function theory \cite{Duren:1983:UF, Kiryatskii:1990:SFC, Lewin:1971:BFC, Pommerenke:1975:UF:book,Thomas:2018:UF:book}, 
for example, the Bieberbach conjecture, for which we refer the reader to \cite{Bieberbach:1916:KDP:book} and to the comprehensive references therein. In the present paper, we derive relations between the coefficients of the Riemann mapping corresponding to arbitrary Jordan domains by employing the properties of the PTs corresponding to  domains with extreme conductivities.

Spectral analysis of the NP operator has drawn significant attention in relation to plasmon
resonances \cite{Ando:2016:PRF,Ammari:2013:STN,Mayergoyz:2005:EPR,Yu:2017:SDA}; the plasmon resonance happens at the spectrum of the NP operator \cite{Grieser:2014:PEP,Kang:2017:SRN}.
The NP operator $\KstarOmega$, which is symmetric on $\LtwobdOmega$ only for a disk or a ball \cite{Lim:2001:SBI}, can be symmetrized by Plemelj's symmetrization principle \cite{Khavinson:2007:PVP}.
 As a result, the spectrum of the NP operator on $H_{0}^{-1/2}(\partial\Omega)$ or $L^2_0(\p\Om)$ lies on the real axis in the complex plane and, more specifically, is contained in $(-1/2,1/2)$ \cite{Kellogg:1953:FPT,Verchota:1984:LPR} (see also \cite{Kang:2018:SPS,Krein:1998:CLO}), where $H_0^{-1/2}(\partial\Omega)$ is the Sobolev space $H^{-1/2}(\partial\Omega)$ with the mean-zero condition. 
 The spectral structure of the NP operator is different depending on the regularity of the domain. 
If $\Om$ has a $C^{1,\alpha}$ boundary, then $\KstarOmega$ is compact so that it admits the spectral decomposition 
\begin{equation}\notag
  \KstarOmega = \sum_{j=1}^\infty\lambda_j\psi_j\otimes\psi_j,
\end{equation}
where $(\lambda_j,\psi_j)$ are pairs of eigenvalues and orthonormalized eigenfunctions of $\KstarOmega$. 

If $\Om$ is merely a Lipschitz domain, then $\KstarOmega$ also admits a continuous spectrum \cite{Helsing:2017:CSN}; see also \cite{Kang:2017:SRN,Li:2018:EEN,Perfekt:2014:SBN,Perfekt:2017:ESN}. 
The spectrum of the NP operator for contacting disks was analyzed \cite{Jung:2018:SAN}.
We refer the reader to a review article \cite{Ando:2020:SAN} and the references therein for more results on the spectral analysis of the NP operator.

In this paper, we numerically compute the spectrum of the NP operator for various smooth domains using the finite section method based on the geometric series expansion of the NP operator, including the algebraic domains given by $\Psi(z)=z+\frac{a}{z^{m}}$ with  $a\in(0,\frac{1}{m})$, $m\in\NN$; see subsection \ref{subsec:examples}.
It turns out that the eigenvalues are clustered in groups whose size is $m+1$. This generalizes the results in \cite{Ammari:2019:SRN}, where the eigenvalues for the algebraic domain with small $a$ is obtained asymptotically. 
Furthermore, we observe a monotonic behavior of eigenvalues of the NP operator depending on the shape deformation of the inclusion from a disk.

The remainder of this paper is organized as follows. Section \ref{sec:preliminary} is devoted to reviewing the geometric series expansions of the layer potential operators. We then estimate the trace of the polarization tensor in section \ref{sec:polarization}, and obtain the asymptotic formula for the effective conductivity of dilute composites in section \ref{sec:Effective}. 
Inequality relations between the coefficients of the Riemann mapping is then derived in section \ref{sec:Riemann}.
By a numerical study, we investigate the shape dependence of the NP operator in section \ref{sec:NP:mono}. The paper ends with a conclusion in section \ref{ref:conclusion}.

\section{Preliminary}\label{sec:preliminary}

\subsection{Layer potential formulation}

Let $\Om$ be a simply connected bounded Lipschitz domain in two dimensions. 
Assume that $\Om$ is occupied by a homogeneous isotropic material with constant conductivity $k$ satisfying $0\leq k\neq 1 \leq \infty$. 
Consider the conductivity transmission problem
\beq\label{cond_eqn0}
\begin{cases}
\ds\nabla\cdot\left(k\chi(\Om)+\chi(\RR^2\setminus\overline{\Om})\right)\nabla u=0\quad&\mbox{in }\RR^2, \\
\ds u(x) - H(x)  =O({|x|^{-1}})\quad&\mbox{as } |x| \to \infty,
\end{cases}
\end{equation}
where $H$ is an arbitrary entire harmonic function. For the case $k=0$ or $\infty$, the zero Neumann boundary condition or the constant Dirichlet boundary condition is imposed on $\p\Om$.

One can find the solution to \eqnref{cond_eqn0} from the layer potential formulation:
\beq \label{eqn:layerpotentialansatz}
u=H+\Scal_{\p\Om}[\varphi],\eeq
where $\varphi$ is the solution to 
$$\left(\lambda I -\Kcal^*_{\p\Om}\right)[\varphi]=\pd{H}{\nu}\quad\mbox{on }\p\Om
$$
with $\lambda=\frac{k + 1}{2(k - 1 )}$. 
The NP operator, $\Kcal^*_{\p\Om}$, is given by \eqnref{eqn:Kstar}, and $\Scal_{\p\Om}[\varphi]$ denotes the single-layer potential 
$$
	\Scal_{\p\Om}[\varphi](x):=\frac{1}{2\pi}\int_{\partial\Omega}\ln|x-y|\varphi(y)\,d\sigma(y),~~~~x\in\RR^2.$$
We note that $\lambda$ is in $(-\infty,-\frac{1}{2}]\cup [\frac{1}{2},\infty)$, for which $\lambda I -\KstarOmega$ is invertible on $L^2_0(\p\Om)$ \cite{Escauriaza:1992:RTW,Verchota:1984:LPR}.

We define the P\'{o}lya--Szeg\"{o} polarization tensor (PT) in $\RR^d$ with $d=2$ as the $2\times2$ matrix $M=(M_{i,j})_{i,j=1}^2$ given by
	\begin{equation}\label{def:PT}
 M_{i,j}:= \int_{\partial \Om} y_j (\lambda I - \KstarOmega)^{-1} \left[\pd{x_i}{\nu}\right] (y) \, d\sigma(y)
	\end{equation}
	with $\lambda=\frac{k + 1}{2(k - 1 )}$. Note that the PT depends on the domain $\Om$ as well as the conductivity $k$. We write the PT as $M(\Om,\lambda)$ to highlight its dependence on the domain and the material parameter. 
The PT of $\Om$ for $d=3$, a $3\times 3$ symmetric matrix again denoted by $M$, is defined in the same way as \eqnref{def:PT}.

The solution $u$ to \eqnref{cond_eqn0} admits a far-field expansion, which one easily finds by taking the Taylor series expansion for the integral formulation \eqnref{eqn:layerpotentialansatz}: for sufficiently large $x=(x_1,x_2)$,
\begin{equation}
	u(x)-H(x)=-\frac{1} {2\pi}\nabla H(0) M(\Om,\lambda) \frac{x^T}{|x|^2} +\mbox{higher order terms}.
\end{equation}  
We refer the reader to \cite{ Ammari:2013:MSM:book,Ammari:2004:RSI:book} and references therein for further details on the layer potential formulation for the conductivity transmission problem.

\subsection{Exterior conformal mapping, Faber polynomials, and Grunsky coefficients}\label{subsec:Faber}
 We continue to assume that $\Omega$ is a simply connected bounded Lipschitz domain in $\mathbb{R}^2$.
We identify $x=(x_1,x_2)$ in $\RR^2$ with $z=x_1+i x_2$ in $\CC$. The symbols $\operatorname{Re}$ and $\operatorname{Im}$ indicate the real and imaginary parts of complex numbers, respectively.
By the Riemann mapping theorem, there exists a unique pair $(\gamma, \Psi)$ where $\gamma >0$ and $\Psi:\{w\in\mathbb{C}: |w|>\gamma\} \rightarrow \mathbb{C}\setminus\overline{\Omega}$ is a conformal map
satisfying $\Psi(\infty)=\infty$ and $\Psi'(\infty)=1$. The mapping then admits the Laurent series expansion
\beq\label{def:Psi}
\Psi(w)=w+a_0+\sum_{n=1}^\infty \frac{a_n}{w^n}
\eeq
for some complex coefficients $a_n$, which can be easily derived by reflecting the function with respect to a circle and applying the Taylor series expansion in a disk.
The quantity $\gamma$ is called the conformal radius of $\Om$ and it coincides the logarithmic capacity of $\overline{\Om}$ (see \eqnref{gamma:cap}).
From the well-known Bieberbach conjecture \cite{Bieberbach:1916:KDP:book}, it holds that
\beq\label{a_1:ineq}
|a_1|\leq\gamma^2.
\eeq
Note that simply connected bounded Lipschitz domains are Jordan domains. 
Indeed, for such a domain, there exists a constant $C>0$ such that for all $x,y \in \Omega$, there exists a smooth curve in $\Omega$ from $x$ to $y$ whose length satisfies $\leq C |x-y|$. This fact, combined with Moore's characterization of Jordan domains proves the assertion. As a result, $\Psi$ extends to the boundaries as a homeomorphism by the Caratheodory extension theorem \cite{Caratheodory:1913:GBR}.
In particular, this gives a natural parametrization of $\partial \Omega$: $\theta \mapsto \Psi(\gamma e^{i \theta})$ for $\theta \in [0, 2\pi)$.

For a complex function
 $$g(w)=w+b_0+\sum_{k=1}^\infty \frac{b_k}{w^k}$$
which is univalent on the exterior of a disk centered at the origin, 
 there exists a unique degree $m$ polynomial $F_m$ such that $F_m(g(w))$ has only one non-negative order term $w^m$ for each $m\in\NN \cup \{0\}$. In other words,
 \beq\label{Faber:condition}
 F_m(g(w)) = w^m +\mbox{negative order terms}.
 \eeq
 Indeed, $F(g(w))$ admits a Laurent series with a finite number of non-negative order terms for any complex polynomial $F$. By appropriately choosing the coefficients for $F$, we have the property $\eqnref{Faber:condition}$. 
Such $F_m$ is called the $m$-th \textit{Faber polynomial} of $g$. The concept of Faber polynomials, first introduced by G. Faber in \cite{Faber:1903:PE}, has been one of the essential elements in geometric function theory (see, e.g., \cite{Duren:1983:UF}).
The Faber polynomials can be also defined via the following generating function relation: for $z\in\Om$ and sufficiently large $w$,
\beq\notag
\frac{w g'(w)}{g(w)-z}=\sum_{m=0}^\infty \frac{F_m(z)}{w^{m}}.
\eeq

In this study, we use the Faber polynomials, denoted again by $F_m$, of the exterior conformal map $\Psi$ associated with $\Om$. We denote by $c_{m,k}$ the \textit{Grunsky coefficients} of $\Psi$ so that for each $m$, $F_m(\Psi(w))$ satisfies
\begin{equation}\label{eqn:Faberdefinition}
	F_m(\Psi(w))
	=w^m+\sum_{k=1}^{\infty}c_{m,k}{w^{-k}}.
\end{equation}
Each $F_m$ is monic and is uniquely determined by $a_0,a_1,\cdots,a_{m-1}$. For example, the first three polynomials are $F_0(z)=1,\ F_1(z)=z-a_0,\ F_2(z)=z^2-2a_0 z+(a_0^2-2a_1).$
The Grunsky identity holds for all $m,n\in\NN$:
$nc_{m,n}=mc_{n,m}.$ 
It then holds that
\beq\label{def:mu}
\mu_{m,k}=\mu_{k,m},\mbox{ where }\mu_{m,k}:= \sqrt{\frac k m} \frac {c_{m,k}}{\gamma^{m+k}}.
\eeq

\subsection{Geometric series expansion of the layer potential operators} \label{subsec:series}
In this subsection, we additionally assume that $\Om$ is enclosed by a $C^{1,\alpha}$ Jordan curve for some $0<\alpha<1$. As we assume $C^{1,\alpha}$ regularity for the boundary curve $\p\Om$, we have further regularity for $\Psi$. That is, by the Kellogg-Warschawski theorem (see, e.g., \cite{Pommerenke:1992:BBC:book}), $\Psi'$ can be continuously extended to the boundary. 
Hence, the map
\beq\label{coord:map}
(\rho, \theta)\mapsto z= \Psi\big(e^{\rho+i \theta}\big),
\eeq
from $[\ln\gamma,\infty)\times \mathbb R$ onto $\CC\setminus\Om$ is $C^1$. As $\Psi$ is a conformal (angle preserving) map, $(\rho,\theta)$ provides curvilinear orthogonal coordinates in $\CC\setminus\Om$ via \eqnref{coord:map}. 
 The scale factors with respect to $\rho$ and $\theta$ coincide with each other. We denote them by
	\begin{equation}
	h(\rho, \theta) := \left|\frac{\partial \Psi} {\partial \rho}\right| = \left|\frac{\partial \Psi} {\partial \theta}\right|.
	\end{equation}	
	
Set $\rho_0=\ln \gamma$. 
The length element on $\p\Om$ is $d\sigma(z)=h(\rho_0,\theta)d\theta$. 
For a smooth function $g(z)=(g\circ\Psi)(e^{\rho+i\theta})$, it holds that
\beq
\frac{\partial g}{\partial \nu}\Big|_{\p \Om}^+(z)=\frac{1}{h(\rho_0,\theta)}\frac{\partial }{\partial \rho}g(\Psi(e^{\rho+i\theta}))\Big|_{\rho\rightarrow\rho_0^+}.\label{eqn:normalderiv}
\eeq
We define density basis functions on $\p\Om$ in terms of the coordinate system \eqnref{coord:map}:
\begin{align*}
\zeta_0(z)&=\frac{1}{h(\rho_0,\theta)},\\
\zeta_{\pm m}(z)&= |m|^{\frac{1}{2}}\frac{e^{\pm im\theta}}{h(\rho_0,\theta)} \quad\mbox{for }m\in\NN.
\end{align*}
%
\begin{lemma}[\cite{Jung:2018:SSM}]\label{lemma:seriesexpan}
Let $\Om$ be a simply connected bounded domain having $\mathcal{C}^{1,\alpha}$ boundary with some $0<\alpha<1$. Let $z=\Psi(w)=\Psi(e^{\rho+i\theta})$ for $\rho>\rho_0=\ln \gamma$. 
We have \beq\label{Scal_zeta0}
\Scal_{\p\Om}[{\zeta}_0](z)=
\begin{cases}
\ln \gamma \quad &\mbox{if }z\in\overline{\Om},\\
\ln|w|\quad&\mbox{if }z\in\CC\setminus\overline{\Om}.
\end{cases}
\eeq 
For each $m\in\NN$, the single-layer potential associated with $\Om$ satisfies
\begin{align*}
\Scal_{\p\Om}[\zeta_m](z)&=
\begin{dcases}
-\frac{1}{2\sqrt{m}\gamma^m} F_m(z) \quad &\mbox{if }z\in \overline{\Omega},\\
-\frac{1}{2\sqrt{m}\gamma^m} \bigg( \sum_{k=1}^\infty c_{m,k} e^{-k(\rho +i\theta)} + \gamma^{2m}e^{m(-\rho+i\theta)} \bigg) \quad &\mbox{if }z\in \CC\setminus\overline{\Omega}.
\end{dcases}
\end{align*}
The series converges uniformly for all $(\rho,\theta)$ such that $\rho\geq\rho_1$ for any fixed $\rho_1>\rho_0$. Since the kernel function of $\Scal_{\p\Om}$ is real-valued, we have $\Scal_{\p\Om}[\zeta_{-m}](z)=\overline{\Scal_{\p\Om}[\zeta_{m}](z)}.$

The NP operator $\mathcal{K}^*_{\p\Om}$ admit the series expansion on $\p\Om$:
\begin{align}
\Kcal^*_{\p\Om}\left[\zeta_0\right]&=\frac{1}{2}\zeta_0, \notag\\ \label{Kstar:series}
 \KstarOmega[\zeta_m]&=\frac{1}{2}\sum_{k=1}^{\infty}{\frac{\sqrt m}{\sqrt k}}\frac{c_{k,m}}{\gamma^{m+k}}\, {{\zeta}_{-k}},\quad
 \quad \KstarOmega[\zeta_{-m}]=\frac{1}{2}\sum_{k=1}^{\infty}{\frac{\sqrt m}{\sqrt k}}\frac{\overline{c_{k,m}}}{\gamma^{m+k}}\, {{\zeta}_{k}}.
\end{align}
The series converges in the Sobolev space $H^{-1/2}(\p\Om)$ sense. 
\end{lemma}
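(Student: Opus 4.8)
The plan is to establish the single-layer formulas first and then read off the expansion of $\KstarOmega$ from the classical jump relation
\[
\pd{}{\nu}\Scal_{\p\Om}[\varphi]\Big|^{\pm}=\Big(\pm\tfrac12 I+\KstarOmega\Big)[\varphi]\quad\text{on }\p\Om,
\]
so that $\KstarOmega[\varphi]=\pd{}{\nu}\Scal_{\p\Om}[\varphi]\big|^{+}-\tfrac12\varphi$, together with the normal-derivative identity \eqnref{eqn:normalderiv}. Parametrizing $\p\Om$ by $\theta\mapsto\Psi(\gamma e^{i\theta})$ makes $d\sigma=h(\rho_0,\theta)\,d\theta$, hence $\zeta_0\,d\sigma=d\theta$ and $\zeta_m\,d\sigma=\sqrt m\,e^{im\theta}\,d\theta$, so every layer-potential value becomes a Fourier-type integral $\tfrac1{2\pi}\int_0^{2\pi}\ln|z-\Psi(\gamma e^{i\theta})|\,e^{im\theta}\,d\theta$ that I would evaluate via the conformal map.

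For $\zeta_0$, note that for $z\in\Om$ the map $w\mapsto\log\!\big((\Psi(w)-z)/w\big)$ is holomorphic and zero-free on $\{|w|\ge\gamma\}\cup\{\infty\}$ (univalence of $\Psi$ and $z\notin\CC\setminus\Om$) and vanishes at $\infty$; the mean value property (Poisson formula for the exterior of a disk, letting the radius tend to $\infty$) of the harmonic function $\ln|(\Psi(w)-z)/w|$ on $|w|=\gamma$ gives $\tfrac1{2\pi}\int_0^{2\pi}\ln|\Psi(\gamma e^{i\theta})-z|\,d\theta=\ln\gamma$, while for $z=\Psi(w_0)\in\CC\setminus\overline\Om$ the same argument applied to $w\mapsto(\Psi(w)-\Psi(w_0))/(w-w_0)$ reduces the integral to $\tfrac1{2\pi}\int_0^{2\pi}\ln|\gamma e^{i\theta}-w_0|\,d\theta=\ln|w_0|$; continuity of $\Scal_{\p\Om}$ across $\p\Om$ then finishes \eqnref{Scal_zeta0}. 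For $\zeta_m$, the engine is the Faber generating function: integrating $\tfrac{w\Psi'(w)}{\Psi(w)-z}=\sum_{j\ge0}F_j(z)w^{-j}$ yields $\log\tfrac{\Psi(w)-z}{w}=-\sum_{j\ge1}\tfrac{F_j(z)}{j}w^{-j}$ for $z\in\Om$, and substituting $z=\Psi(v)$ together with \eqnref{eqn:Faberdefinition} produces the Grunsky expansion $\log\tfrac{\Psi(w)-\Psi(v)}{w-v}=-\sum_{m,k\ge1}\tfrac{c_{m,k}}{m}w^{-m}v^{-k}$ (for $|w|>|v|>\gamma$, and in the reversed regime after invoking $nc_{m,n}=mc_{n,m}$). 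Taking real parts along $|w|=\gamma$ turns $\ln|z-\Psi(\gamma e^{i\theta})|$ into an explicit Fourier series in $\theta$; reading off the coefficient against $e^{im\theta}$ makes the $\log\gamma$ and conjugated terms drop out and leaves exactly $-\tfrac1{2\sqrt m\gamma^m}F_m(z)$ for $z\in\Om$. For $z=\Psi(e^{\rho+i\theta})$ outside, one additionally expands $\log(w_0-\gamma e^{i\vartheta})$ as a geometric series in the integration variable $\vartheta$ (with $w_0=e^{\rho+i\theta}$), and the same bookkeeping yields the $\sum_k c_{m,k}e^{-k(\rho+i\theta)}$ part plus the extra $\gamma^{2m}e^{m(-\rho+i\theta)}$ term coming from the conjugated piece. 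The relation $\Scal_{\p\Om}[\zeta_{-m}]=\overline{\Scal_{\p\Om}[\zeta_m]}$ is immediate from realness of the kernel and $\zeta_{-m}=\overline{\zeta_m}$. (Alternatively one could deduce the interior formula from the exterior one: the exterior series has continuous boundary trace $-\tfrac1{2\sqrt m\gamma^m}F_m$ on $\p\Om$ by \eqnref{eqn:Faberdefinition}, while $\Scal_{\p\Om}[\zeta_m]$ is harmonic in $\Om$ and $F_m$ is holomorphic, so uniqueness of the Dirichlet problem forces equality in $\Om$.)

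For \eqnref{Kstar:series} I would differentiate the (already established, uniformly convergent) exterior formulas and apply \eqnref{eqn:normalderiv}. For $\zeta_0$: $\Scal_{\p\Om}[\zeta_0]=\rho$ outside, so $\pd{}{\nu}\Scal_{\p\Om}[\zeta_0]\big|^{+}=1/h(\rho_0,\theta)=\zeta_0$, giving $\KstarOmega[\zeta_0]=\tfrac12\zeta_0$. For $\zeta_m$: differentiating $\sum_k c_{m,k}e^{-k(\rho+i\theta)}+\gamma^{2m}e^{m(-\rho+i\theta)}$ in $\rho$, evaluating at $\rho_0=\ln\gamma$, dividing by $h(\rho_0,\theta)$, and rewriting $e^{im\theta}/h=\zeta_m/\sqrt m$, $e^{-ik\theta}/h=\zeta_{-k}/\sqrt k$, I get $\pd{}{\nu}\Scal_{\p\Om}[\zeta_m]\big|^{+}=\tfrac12\zeta_m+\tfrac12\sum_{k\ge1}\tfrac{\sqrt k}{\sqrt m}\tfrac{c_{m,k}}{\gamma^{m+k}}\zeta_{-k}$; subtracting $\tfrac12\zeta_m$ gives $\KstarOmega[\zeta_m]=\tfrac12\sum_{k\ge1}\tfrac{\sqrt k}{\sqrt m}\tfrac{c_{m,k}}{\gamma^{m+k}}\zeta_{-k}$, which equals $\tfrac12\sum_{k\ge1}\tfrac{\sqrt m}{\sqrt k}\tfrac{c_{k,m}}{\gamma^{m+k}}\zeta_{-k}$ by $nc_{m,n}=mc_{n,m}$; the $\zeta_{-m}$ case follows by conjugation. (As a consistency check, differentiating $-\tfrac1{2\sqrt m\gamma^m}F_m$ along the boundary using $wF_m'(\Psi(w))\Psi'(w)=mw^m-\sum_k kc_{m,k}w^{-k}$ gives the interior normal derivative $-\tfrac12\zeta_m+\tfrac12\sum_k\tfrac{\sqrt k}{\sqrt m}\tfrac{c_{m,k}}{\gamma^{m+k}}\zeta_{-k}$, consistent with the jump.)

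The hard part is analytic rather than algebraic: legitimizing the term-by-term integration and differentiation of the Faber/Grunsky series up to and across $|w|=\gamma$, which is where the $C^{1,\alpha}$ hypothesis is essential. By the Kellogg--Warschawski theorem $\Psi'$ extends continuously (indeed $C^{0,\alpha}$) to $|w|=\gamma$, so the coordinate map \eqnref{coord:map} is $C^1$; combined with the Grunsky inequality (which gives $\sum_k|\mu_{m,k}|^2\le1$ for the $\mu_{m,k}$ of \eqnref{def:mu}, hence $|c_{m,k}|\le\sqrt{m/k}\,\gamma^{m+k}$), the exterior series and its $\rho$-derivative converge geometrically, uniformly on $\{\rho\ge\rho_1\}$ for any $\rho_1>\rho_0$, which is the asserted uniform convergence and what justifies termwise $\p_\rho$. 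The same square-summability of $(\mu_{m,k})_k$ for fixed $m$, together with $\|\zeta_{-k}\|_{H^{-1/2}(\p\Om)}$ being bounded above and below, gives convergence of \eqnref{Kstar:series} in $H^{-1/2}(\p\Om)$; for the pointwise boundary-trace identities one instead uses that a $C^{1,\alpha}$ function on the circle has absolutely summable Fourier coefficients.
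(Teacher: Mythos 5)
The paper does not reprove this lemma; it states it by citation to \cite{Jung:2018:SSM}, so there is no internal proof to compare against. Your argument is correct and follows essentially the same route used in that reference: compute $\Scal_{\p\Om}[\zeta_m]$ by pairing the Faber/Grunsky expansion of $\log\frac{\Psi(w)-z}{w}$ against $e^{im\theta}\,d\theta$, then obtain \eqnref{Kstar:series} from the jump relation $\partial_\nu\Scal_{\p\Om}|^\pm=(\pm\tfrac12I+\KstarOmega)$, and the analytic inputs you invoke (Kellogg--Warschawski regularity of $\Psi'$, the Grunsky bound $\sum_k|\mu_{m,k}|^2\le1$ yielding geometric convergence on $\{\rho\ge\rho_1>\rho_0\}$ and $\ell^2$ control at $\rho=\rho_0$) are the right ingredients for the uniform and $H^{-1/2}$ convergence claims.
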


In fact, Lemma \ref{lemma:seriesexpan} holds assuming that $\Om$ is enclosed by a piecewise $C^{1,\alpha}$ Jordan curve possibly with a finite number of corner points without inward or outward cusps. Under this boundary regularity assumption, the series in \eqnref{Kstar:series} converges in the sense of $l^2(\CC)$ space spanned by $\zeta_{\pm m}$ and $\zeta_0$.

The logarithmic capacity of a compact connected set $E$, namely $\mbox{Cap}(E)$, in the complex plane is defined as
$$-\ln\left(\mbox{Cap}(E)\right)=\lim_{z\rightarrow\infty}\left(G(z)-\ln|z|\right),$$
where $G(z)$ denotes the Green function of the Laplacian of $\overline{\CC}\setminus E$ having singularity at $z=\infty$. In other words, $G$ satisfies that $G(z)=0$ on $\p E$ and 
$$G(z)=\ln|z|-\ln\left(\mbox{Cap}(E)\right)+o(1)\quad\mbox{as }z\rightarrow\infty.$$
From \eqnref{Scal_zeta0}, $G(z)=\Scal_{\p\Om}[\zeta_0](z)-\ln \gamma$ satisfies these conditions and
\beq\label{gamma:cap}
\gamma=\mbox{Cap}(\overline{\Om}).
\eeq

From \eqnref{def:mu} and \eqnref{Kstar:series}, one can express $\mathcal{K}^*_{\p\Om}$ with respect to the basis $\left\{\cdots,\zeta_{-2},\zeta_{-1},\zeta_0,\zeta_1,\zeta_2,\cdots\right\}$ as the double-infinite self-adjoint matrix 
\begin{equation}\label{eqn:matrixKstar}
\ds\left[\Kcal^*_{\p\Om}\right]=\frac{1}{2}\begin{bmatrix}
\ds& & \vdots & & & &\vdots & &\\[1mm]
\ds& 0 & 0 & 0& 0&{\mu}_{3,1}&{\mu}_{3,2}&{\mu}_{3,3}&\\[1mm]
\ds\cdots& 0 & 0 & 0& 0&{\mu}_{2,1}&{\mu}_{2,2}&{\mu}_{2,3}&\cdots\\[1mm]
\ds& 0 & 0 & 0& 0&{\mu}_{1,1}&{\mu}_{1,2}&{\mu}_{1,3}&\\[1mm]
\ds& 0 & 0 & 0& 1&0&0&0&\\[1mm]
\ds&{\overline{{\mu}_{1,3}}}& {\overline{{\mu}_{1,2}}}\ds&{\overline{{\mu}_{1,1}}}&0&0&0&0&\\[1mm]
\ds\cdots&{\overline{{\mu}_{2,3}}}&{\overline{{\mu}_{2,2}}}\ds&{\overline{{\mu}_{2,1}}}&0&0&0&0&\cdots\\[1mm]
\ds&\overline{{\mu}_{3,3}}& {\overline{{\mu}_{3,2}}}\ds&{\overline{{\mu}_{3,1}}}&0&0&0&0&\\[1mm]
\ds& &\vdots & & & &\vdots& &
\end{bmatrix}.
\end{equation}

\section{Polarization tensor for a planar inclusion with extreme or near-extreme conductivity}\label{sec:polarization}
In this section we express the polarization tensor of a simply connected bounded Lipschitz domain with extreme or near-extreme conductivity in terms of the external conformal map and derive properties of the trace of the polarization tensor. 
\subsection{Explicit formula of the polarization tensor}

As is well-known as \textit{the Hashin-Shtrikman bound}, the PT satisfies (see \cite{Capdeboscq:2004:RSR,Lipton:1993:IEE})
		\beq
		\label{eq:upperhs}
		\ds \frac{1}{k-1} \operatorname{tr} (M) \leq \left(d-1 + \frac 1 k\right) |\Omega|
		\eeq
		and
		\beq
		\label{eq:lowerhs}
		\ds (k-1)\operatorname{tr} (M^ {-1}) \leq  \frac{d-1+k} {|\Omega|},
		\eeq
where $|\Om|$ denotes the volume of $\Om$ and $\mbox{tr}$ means the trace of a matrix. Attainability by simply connected domain of the eigenvalues of the PT satisfying \eqnref{eq:upperhs}--\eqnref{eq:lowerhs} was numerically verified for $d=2$ \cite{Ammari:2006:ASC}.
The lower Hashin--Shtrikman bound is attained (i.e., the equality holds in \eqnref{eq:lowerhs}) if and only if $\Om$ is an ellipse, asserted as the Polya--Szeg\"o conjecture \cite{Polya:1951:IIM:book}; this conjecture was proved by Kang and Milton \cite{Kang:2008:SPS}.

As one of our main objectives of this paper, we investigate the property of $\operatorname{tr}(M)$ and $\operatorname{tr}(M^{-1})$ for a planar inclusion with extreme conductivity. 
We note that for $k\rightarrow 0$ or $\infty$, \eqnref{eq:lowerhs} becomes 
\beq\label{2nd:extreme}
		\left|\operatorname{tr} (M^ {-1})\right| \leq \frac {1} {|\Omega|}.
		\eeq
However, the upper Hashin-Shtrikman bound, \eqnref{eq:upperhs}, becomes vacuous as $k \to 0$ or $\infty$. In the present paper, instead of using \eqnref{eq:upperhs}, we characterize the trace of the PT for an inclusion with extreme or near-extreme conductivity by using the following explicit expression of the PT:

\begin{theorem}\label{theorem:DJM1}
Let $\Omega$ be a simply connected, planar bounded Lipschitz domain with $k=0,\infty$ (i.e., $\lambda=\pm\frac{1}{2}$).  We denote the exterior conformal mapping associated with $\Om$ as in \eqnref{def:Psi}.
Then, the polarization tensor of $\Om$ is
 \begin{equation} \label{eq:explicitformula}
M(\Om,\pm\frac{1}{2}) = 2\pi \begin{bmatrix}
\pm \gamma^2 + \operatorname{Re}(a_1) & \operatorname{Im}(a_1) \\[1.5mm]
\operatorname{Im}(a_1) & \pm \gamma^2 - \operatorname{Re}(a_1)
\end{bmatrix}.
\end{equation}
\end{theorem}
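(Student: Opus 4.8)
The plan is to compute the polarization tensor directly from its definition \eqnref{def:PT} using the geometric series expansion of the NP operator in Lemma \ref{lemma:seriesexpan}. The key observation is that since $\lambda=\pm\frac12$, the operator $\lambda I-\KstarOmega$ acting on the subspace orthogonal to $\zeta_0$ has a particularly simple form, and the computation of $M_{i,j}$ reduces to identifying the right density and pairing it against $y_j$. First I would express the boundary functions $\partial x_i/\partial\nu$ in the basis $\{\zeta_m\}$: using \eqnref{eqn:normalderiv} together with $x_1=\RRe(z)$, $x_2=\IIm(z)$ and $z=\Psi(e^{\rho+i\theta})$, a short computation with $\Psi(w)=w+a_0+\sum a_n w^{-n}$ shows that $\partial x_1/\partial\nu$ and $\partial x_2/\partial\nu$ on $\p\Om$ are (real and imaginary parts of) the combination $\gamma(\zeta_1+\zeta_{-1})/2$ up to scalar factors — concretely the only surviving frequencies are $e^{\pm i\theta}$ because $\Psi(e^{\rho+i\theta})=e^{\rho}e^{i\theta}+a_0+O(e^{-\rho})$ and the normal derivative picks out the $\partial_\rho$ part evaluated at $\rho_0$. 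So $\partial x_i/\partial\nu$ lies in $\mathrm{span}\{\zeta_1,\zeta_{-1}\}$, with explicit coefficients involving $\gamma$.

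Next I would invert $\lambda I-\KstarOmega$ on this data. From \eqnref{Kstar:series}, $\KstarOmega$ maps $\zeta_{\pm1}$ into the span of all $\zeta_{\mp k}$, so inverting $\lambda I-\KstarOmega$ naively requires handling the full infinite matrix. The crucial simplification for $\lambda=\pm\frac12$ is that although $(\lambda I-\KstarOmega)^{-1}[\partial x_i/\partial\nu]$ is generally an infinite series, only its $\zeta_{\pm1}$-components contribute to $M_{i,j}$: indeed $M_{i,j}=\int_{\p\Om}y_j\,(\lambda I-\KstarOmega)^{-1}[\partial x_i/\partial\nu]\,d\sigma$, and pairing $y_j=\RRe$ or $\IIm$ of $\Psi(e^{\rho_0+i\theta})$ against $\zeta_k\,d\sigma = \sqrt{|k|}\,e^{ik\theta}\,d\theta$ annihilates all frequencies except those matching $e^{\pm i\theta}$ in the Laurent expansion of $\Psi$ on $|w|=\gamma$, which are exactly the $\gamma e^{\pm i\theta}$ and $a_0$ terms. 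Therefore I only need the $\zeta_{\pm1}$ block of $(\lambda I-\KstarOmega)^{-1}$. On that two-dimensional (complexified) block the matrix of $\KstarOmega$ restricted to the relevant coupling has entries $\frac12\mu_{1,1}=\frac12 c_{1,1}/\gamma^2$, and since $F_2(\Psi(w))=w^2+c_{2,1}w^{-1}+\cdots$ with $F_2(z)=z^2-2a_0z+(a_0^2-2a_1)$, one reads off $c_{2,1}=-2a_1$ (matching $w^2$ against $\Psi(w)^2-2a_0\Psi(w)+(a_0^2-2a_1)$), whence $c_{1,1}=\tfrac12 c_{2,1}=-a_1$ by the Grunsky identity $1\cdot c_{1,1}$... wait — more carefully, $nc_{m,n}=mc_{n,m}$ gives $c_{1,1}=c_{1,1}$ trivially, so I instead compute $c_{1,1}$ directly from $F_1(\Psi(w))=\Psi(w)-a_0=w+\sum_{n\ge1}a_nw^{-n}$, giving $c_{1,k}=a_k$ and in particular $c_{1,1}=a_1$.

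Then inverting the $2\times2$ complex block $\lambda I - \frac12\begin{bmatrix}0 & \mu_{1,1}\\ \overline{\mu_{1,1}} & 0\end{bmatrix}$ with $\mu_{1,1}=a_1/\gamma^2$ and $\lambda=\pm\frac12$ is elementary, and substituting back yields the stated matrix; the factor $2\pi$ and the $\pm\gamma^2$ diagonal come from the $\gamma e^{\pm i\theta}$ (identity-like) part of $\Psi$ paired against itself, while the $\RRe(a_1)$, $\IIm(a_1)$ entries come from the cross term between the $\gamma e^{\pm i\theta}$ piece of $y_j$ and the $a_1$-correction in the inverted density. I expect the main obstacle to be bookkeeping: keeping the $\sqrt{|m|}$ normalizations, the complex-to-real conversion between $\{\zeta_m\}$ and the real functions $\partial x_i/\partial\nu$, and the $\frac12$ factors consistent, and verifying rigorously that the tail of the series genuinely contributes nothing to the pairing (which follows because $\Psi$ restricted to $|w|=\gamma$ has Fourier modes only at $e^{i\theta}$ and nonpositive frequencies $e^{-ik\theta}$, $k\ge 0$, so $\int y_j \zeta_{k}\,d\sigma=0$ for $k\ge 2$ and the coupling structure of $\KstarOmega$ keeps everything in the right frequency classes). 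One should also note the Lipschitz (rather than $C^{1,\alpha}$) hypothesis requires invoking the density-of-$C^{1,\alpha}$-domains approximation remarked after Lemma \ref{lemma:seriesexpan}, or citing that \eqnref{def:PT} and its series representation extend by the $l^2$-convergence statement there.
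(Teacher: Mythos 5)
The paper's proof of Theorem \ref{theorem:DJM1} is essentially a citation: \cite{Choi:2018:GME:preprint} establishes \eqnref{eq:explicitformula} for $C^{1,\alpha}$ boundaries via the series expansion, and \cite[Proposition 2.3]{Choi:2018:CEP} and \cite{Kang:2015:CCM} give exact relations valid for Lipschitz domains, with the $k=\infty$ case handled by minor modification. You instead attempt a self-contained direct computation from Lemma \ref{lemma:seriesexpan}, which is a genuinely different route and would be worth having; but the reduction to a $2\times 2$ block rests on two claims that are both false.

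First, $\partial x_i/\partial\nu$ does \emph{not} lie in $\mathrm{span}\{\zeta_1,\zeta_{-1}\}$. From \eqnref{eqn:normalderiv},
\begin{equation*}
\frac{\partial x_1}{\partial\nu} + i\,\frac{\partial x_2}{\partial\nu}
= \frac{1}{h(\rho_0,\theta)}\,\partial_\rho\Psi\big(e^{\rho+i\theta}\big)\Big|_{\rho=\rho_0}
= \frac{1}{h(\rho_0,\theta)}\Big(\gamma e^{i\theta} - \sum_{n\geq 1} n a_n\gamma^{-n}e^{-in\theta}\Big),
\end{equation*}
so the normal derivatives involve every $\zeta_{\pm n}$ with $a_n\neq 0$. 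The $O(e^{-\rho})$ terms you discarded are evaluated at $\rho_0 = \ln\gamma$, not in the limit $\rho\to\infty$, and they do contribute. Second, the claim $\int y_j\zeta_k\,d\sigma = 0$ for $|k|\geq 2$ is also incorrect. With $\zeta_k\,d\sigma = \sqrt{|k|}\,e^{ik\theta}\,d\theta$ and $\Psi(\gamma e^{i\theta}) = \gamma e^{i\theta} + a_0 + \sum_{n\geq 1}a_n\gamma^{-n}e^{-in\theta}$, a direct computation gives $\int y_1\,\zeta_k\,d\sigma = \pi\sqrt{k}\,a_k\gamma^{-k}$ for $k\geq 2$, which does not vanish in general. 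What is true is that the complex function $\Psi(\gamma e^{i\theta})$ has Fourier support in $\{1,0,-1,-2,\dots\}$, so $\int\Psi\,\zeta_{-k}\,d\sigma = 0$ for $k\geq 2$; but $y_j$ is the real or imaginary part of $\Psi$, which has Fourier support on both sides.

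Consequently, inverting $\lambda I - \KstarOmega$ on $\partial x_i/\partial\nu$ genuinely involves the full Grunsky matrix $(\mu_{m,k})$, and the fact that only $\gamma^2$ and $a_1$ survive in the final answer is a nontrivial cancellation identity (this is precisely what the cited papers establish) rather than an a priori frequency-matching argument. Your identification $c_{1,k} = a_k$ from $F_1(\Psi(w)) = w + \sum_{n\geq 1}a_n w^{-n}$, hence $\mu_{1,1} = a_1/\gamma^2$, is correct and is indeed a key ingredient, but the step from there to a two-dimensional inversion problem does not hold.
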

\begin{proof}
For domains with $C^{1,\alpha}$ boundary curves, \eqnref{eq:explicitformula} was derived in \cite{Choi:2018:GME:preprint} by using the series expansions of the layer potential operators described in subsection \ref{subsec:series} (see also \cite{Choi:2020:ASR:preprint} for the relations between higher order terms). 
 For general Lipschitz domains, this is a direct consequence of the exact relations between the coefficients of the conformal mapping and the generalized polarization tensors obtained in \cite[Proposition 2.3]{Choi:2018:CEP} or \cite{Kang:2015:CCM}.
 Although, it was dealt with only in the case $k=0$ in \cite{Choi:2018:CEP,Kang:2015:CCM}, one can extend the result to the case $k=\infty$ with minor changes in the proof.
\end{proof}

As a corollary of Theorem \ref{theorem:DJM1}, we find that
\beq\label{eqn:tr:inv}
\operatorname{tr} (M^{-1}) = \pm \frac 1 \pi \frac{\gamma^2 } {\gamma^4 - |a_1|^2}.
\eeq
As the volume of $\Om$ satisfies
\beq\label{Om:vol}
|\Om|=\pi \Big(\gamma ^2 - \sum_{n \geq 1} n \frac{|a_n|^2}{\gamma^{2n}}\Big),
\eeq
equality holds in \eqnref{2nd:extreme} if and only if $a_n=0$ for all $n\geq 2$ (see \cite{Choi:2018:GME:preprint}). This is an alternative proof of the P\'{o}lya--Szeg\"o conjecture for a planar inclusion with extreme conductivity. Also note that the eigenvalues of $M(\Om,\pm\frac{1}{2})$, namely $\tau_1$ and $\tau_2$, are
\beq\label{tau1tau2}
\tau_1=\pm 2\pi\left(\gamma^2+|a_1|\right),\quad \tau_2=\pm 2\pi\left(\gamma^2-|a_1|\right).
\eeq

Let us now analyze the trace of the polarization tensor for $k\rightarrow 0$ or $\infty$. 
As a direct consequence of Theorem \ref{theorem:DJM1}, it also holds for $k=0,\infty$ (i.e., $\lambda=\pm \frac{1}{2}$) that
$
\operatorname{tr}(M) = \pm 4 \pi \gamma^2.
$
Furthermore, we validate that this relation approximately holds also for the near-extreme conductivity
case:
\begin{theorem} \label{cor:obs}
Let $\Omega$ be a simply connected bounded Lipschitz domain  with the constant conductivity $k$. Let $\gamma$ be the logarithmic capacity of $\p\Om$. 
For $\lambda\approx\pm\frac{1}{2}$, it holds that
	\begin{equation}\label{eqn:tr:approx}
	\operatorname{tr}(M)=\pm4\pi\gamma^2 + O\Big(|\lambda|-\frac{1}{2}\Big).
	\end{equation}
\end{theorem}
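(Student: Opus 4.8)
The plan is to combine the exact endpoint value $\operatorname{tr} M(\Om,\pm\frac12)=\pm4\pi\gamma^2$ — which follows from Theorem \ref{theorem:DJM1} and was already recorded just before the statement — with the fact that $\lambda\mapsto\operatorname{tr} M(\Om,\lambda)$ is real-analytic, hence Lipschitz, across $\lambda=\pm\frac12$. To set this up, I would write the trace as an $L^2$ pairing. Put $\nu_i:=\p x_i/\p\nu$ on $\p\Om$; since $\int_{\p\Om}\nu_i\,d\sigma=\int_\Om\operatorname{div}e_i\,dx=0$ we have $\nu_i\in\LtwozerobdOmega$, and from \eqnref{def:PT},
\beq\label{eqn:trM:pairing}
\operatorname{tr} M(\Om,\lambda)=\sum_{i=1}^{2}\big\langle\, x_i|_{\p\Om}\,,\;(\lambda I-\KstarOmega)^{-1}[\nu_i]\,\big\rangle_{\LtwobdOmega},
\eeq
where $x_i|_{\p\Om}\in\LtwobdOmega$ is the restriction of the $i$-th coordinate function, and the pairing is legitimate because $(\lambda I-\KstarOmega)^{-1}[\nu_i]\in\LtwozerobdOmega$.

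Next I would establish analyticity of the resolvent. The only substantive input is that $\pm\frac12 I-\KstarOmega$ is invertible on $\LtwozerobdOmega$ for a general bounded Lipschitz domain — exactly the result of \cite{Escauriaza:1992:RTW,Verchota:1984:LPR} recalled in section \ref{sec:preliminary}. Granting this, since $\mu\mapsto\mu I-\KstarOmega$ is affine, the factorization $\mu I-\KstarOmega=(\pm\tfrac12 I-\KstarOmega)\big(I+(\mu\mp\tfrac12)(\pm\tfrac12 I-\KstarOmega)^{-1}\big)$ and a Neumann series show that $\mu I-\KstarOmega$ remains invertible for all complex $\mu$ with $|\mu\mp\tfrac12|<\delta:=\big\|(\pm\tfrac12 I-\KstarOmega)^{-1}\big\|^{-1}$, and that $\mu\mapsto(\mu I-\KstarOmega)^{-1}$ is operator-norm analytic there; in particular
\[
\big\|(\mu I-\KstarOmega)^{-1}-(\pm\tfrac12 I-\KstarOmega)^{-1}\big\|_{\mathcal L(\LtwozerobdOmega)}\le C_\Om\,|\mu\mp\tfrac12|\qquad\text{for }|\mu\mp\tfrac12|\le\tfrac\delta2,
\]
with $C_\Om$ depending only on $\Om$. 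Substituting into \eqnref{eqn:trM:pairing} and applying Cauchy--Schwarz gives that $\lambda\mapsto\operatorname{tr} M(\Om,\lambda)$ is analytic near $\lambda=\pm\tfrac12$ with $\operatorname{tr} M(\Om,\lambda)=\operatorname{tr} M(\Om,\pm\tfrac12)+O\!\big(|\lambda\mp\tfrac12|\big)$. By Theorem \ref{theorem:DJM1} the leading term is $\pm4\pi\gamma^2$, and since the admissible range is $\lambda\in(-\infty,-\tfrac12]\cup[\tfrac12,\infty)$, for $\lambda$ near $+\tfrac12$ one has $|\lambda-\tfrac12|=|\lambda|-\tfrac12$ and for $\lambda$ near $-\tfrac12$ one has $|\lambda+\tfrac12|=|\lambda|-\tfrac12$, so the error is $O(|\lambda|-\tfrac12)$, which is \eqnref{eqn:tr:approx}.

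I do not expect a genuine obstacle: the argument rests entirely on the endpoint invertibility of $\pm\tfrac12 I-\KstarOmega$ on $\LtwozerobdOmega$ for merely Lipschitz domains, and that is the cited nontrivial fact; everything else is elementary resolvent perturbation theory together with the boundedness of the functionals $\varphi\mapsto\langle x_i|_{\p\Om},\varphi\rangle$. (For a $C^{1,\alpha}$, or piecewise $C^{1,\alpha}$, domain one could alternatively read the same estimate off the matrix representation \eqnref{eqn:matrixKstar} and Lemma \ref{lemma:seriesexpan}, expanding $(\lambda I-\KstarOmega)^{-1}$ around $\lambda=\pm\tfrac12$ in the basis $\{\zeta_{\pm m}\}$.)
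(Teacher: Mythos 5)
Your proof is correct, but it follows a genuinely different route from the paper's. The paper's proof is short: it cites the spectral resolution of the symmetrized NP operator (Plemelj's principle) via \cite{Kang:2016:SPN} to assert that $\varphi_\lambda=(\lambda I-\KstarOmega)^{-1}[\p x_i/\p\nu]$ is Lipschitz in $\lambda$ in the $H^{-1/2}(\p\Om)$ norm, with a constant independent of $\lambda_1,\lambda_2$ over the entire admissible range $|\lambda|\ge\tfrac12$, and then reads off the conclusion from \eqnref{def:PT}. You instead stay in $L^2_0(\p\Om)$, take as your sole nontrivial input the endpoint invertibility of $\pm\tfrac12 I-\KstarOmega$ on $L^2_0$ (the Verchota / Escauriaza--Fabes--Verchota result already recalled in the paper), and then run elementary resolvent perturbation: the Neumann-series expansion around $\pm\tfrac12$ gives operator-norm analyticity and a local Lipschitz estimate, after which Cauchy--Schwarz against the bounded trace functional $x_i|_{\p\Om}$ finishes. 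Both arguments hinge on the same invertibility at the endpoints; the difference is that the paper leans on the spectral theorem and a uniform-in-$\lambda$ estimate from the cited reference, whereas your version is self-contained and purely soft-analytic, at the cost of giving only a local constant $C_\Om$ valid in a neighborhood of $\pm\tfrac12$ — which is all the $O(|\lambda|-\tfrac12)$ statement actually requires. Your remark that the same conclusion can be read off the matrix representation \eqnref{eqn:matrixKstar} for $C^{1,\alpha}$ domains is a fair alternative, though it would not by itself cover the merely Lipschitz case the theorem is stated for.
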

\begin{proof}
Let $|\lambda_k|\geq\frac{1}{2}$ ($k=1,2$) and define
 $$\varphi_{\lambda_k}:=(\lambda_k I - \KstarOmega)^{-1} \Big[\pd{x_i}{\nu}\Big].$$
 Then, it holds from the spectral resolution of the NP operator (see the proof of Theorem 3.3 in \cite{Kang:2016:SPN}) that
$$\left\|\varphi_{\lambda_1}-\varphi_{\lambda_2}\right\|_{H^{-1/2}(\p\Om)}\leq C\left|\lambda_1-\lambda_2\right|$$
for some constant independent of $\lambda_1$, $\lambda_2$. 
From the definition \eqnref{def:PT}, this proves the theorem. 
\end{proof}

\subsection{Properties of the trace of the polarization tensor}

One can estimate $\gamma$ in terms of $\mbox{diam}(\Om)$ thanks to previous literature (e.g., \cite{Schaeffer:1950:CRS:book}). 
We give the proof at the end of this subsection for the reader's convenience.
\begin{lemma} \label{lemma:diambound}
The conformal radius $\gamma$ satisfies that
	 $\frac{1}{4}\operatorname{diam}(\Om)\leq\gamma\leq\operatorname{diam}(\Om).$
\end{lemma}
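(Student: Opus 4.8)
The plan is to prove the two inequalities separately by classical means: $\gamma\geq\frac14\operatorname{diam}(\Om)$ will come from Koebe's one‑quarter theorem, and $\gamma\leq\operatorname{diam}(\Om)$ from the monotonicity of the logarithmic capacity together with the fact that a disk of radius $r$ has capacity $r$ (which follows directly from the definition of $\mbox{Cap}$ via the Green function recalled in subsection \ref{subsec:series}).

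For the lower bound, fix $p,q\in\overline{\Om}$ realizing the diameter, i.e. $|p-q|=\operatorname{diam}(\Om)$; such points exist since $\overline{\Om}$ is compact and $p\neq q$ because $\Om$ is open. The idea is to turn the exterior map $\Psi$ into a normalized univalent map of the unit disk by composing with an inversion centered at $p$. Concretely, set
\[
G(\zeta):=\frac{\gamma}{\Psi(\gamma/\zeta)-p},\qquad 0<|\zeta|<1 .
\]
Since $\zeta\mapsto\gamma/\zeta$ maps the punctured unit disk onto $\{|w|>\gamma\}$, where $\Psi$ is univalent with image $\CC\setminus\overline{\Om}$, and since $z\mapsto\gamma/(z-p)$ is a M\"{o}bius transformation that is well defined and injective on $\CC\setminus\overline{\Om}$ (because $p\in\overline{\Om}$, so $z\neq p$ there), the function $G$ is holomorphic and univalent on $0<|\zeta|<1$. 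Plugging the Laurent expansion \eqnref{def:Psi} into the definition, one checks that $G$ extends holomorphically across $\zeta=0$ with $G(0)=0$ and $G'(0)=1$; moreover $G$ attains the value $0$ only at $\zeta=0$, since $\gamma/(z-p)$ never vanishes, so $G$ is in fact univalent on the whole unit disk (hence a normalized univalent map). Its omitted values include $\gamma/(q-p)$, which is the image under $z\mapsto\gamma/(z-p)$ of the point $q\in\overline{\Om}$. Koebe's one‑quarter theorem then forces $\bigl|\gamma/(q-p)\bigr|\geq\frac14$, i.e. $\operatorname{diam}(\Om)=|p-q|\leq 4\gamma$.

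For the upper bound, recall from \eqnref{gamma:cap} that $\gamma=\mbox{Cap}(\overline{\Om})$. Choose any $x_0\in\Om$ and put $d=\operatorname{diam}(\Om)$; then every point of $\overline{\Om}$ lies within distance $d$ of $x_0$, so $\overline{\Om}\subseteq\overline{B(x_0,d)}$. The logarithmic capacity is monotone under inclusion of compact sets: if $E_1\subseteq E_2$, then the difference of the Green functions (with pole at infinity) of $\overline{\CC}\setminus E_1$ and $\overline{\CC}\setminus E_2$ is harmonic on $\overline{\CC}\setminus E_2$ — the logarithmic poles at infinity cancel, so infinity is a removable singularity — and is nonnegative on $\partial E_2$; the minimum principle, combined with the expansion $G(z)=\ln|z|-\ln\mbox{Cap}(E)+o(1)$, then yields $\mbox{Cap}(E_1)\leq\mbox{Cap}(E_2)$. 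Since the Green function of $\overline{\CC}\setminus\overline{B(x_0,d)}$ is $\ln(|z-x_0|/d)$, we get $\mbox{Cap}(\overline{B(x_0,d)})=d$, whence $\gamma\leq d=\operatorname{diam}(\Om)$.

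The routine parts are the Laurent‑series computation showing $G(0)=0,\ G'(0)=1$ and the standard maximum‑principle argument for capacity monotonicity. The one point that deserves a little care is verifying that $G$ is univalent on the \emph{full} unit disk (not merely on the punctured disk), so that Koebe's theorem is applicable at the origin; this reduces to the observation that the only preimage of $0$ under $G$ is $\zeta=0$, because the inversion $z\mapsto\gamma/(z-p)$ omits the value $0$.
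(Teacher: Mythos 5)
Your proof is correct, and the two inequalities are handled with partly different tools than the paper uses. For the lower bound $\operatorname{diam}(\Om)\le 4\gamma$, your construction of $G(\zeta)=\gamma/(\Psi(\gamma/\zeta)-p)$ is exactly the same change of variables the paper performs (their $f$ is $G/\gamma$ with $z_0=p$); the only difference is that you then invoke the Koebe one-quarter theorem on the omitted value $\gamma/(q-p)$, whereas the paper applies the Bieberbach coefficient inequality $|a_2|\le 2$ to the same map and finishes with a triangle inequality over $a_0$. Since Koebe $1/4$ is itself a corollary of $|a_2|\le 2$, these arguments belong to the same circle of ideas; yours is marginally more direct because it produces the factor $4$ in one step. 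For the upper bound $\gamma\le\operatorname{diam}(\Om)$, your route is genuinely different: you use monotonicity of logarithmic capacity under set inclusion together with $\mathrm{Cap}(\overline{B(x_0,d)})=d$, appealing to the identity $\gamma=\mathrm{Cap}(\overline{\Om})$ from \eqnref{gamma:cap}. The paper instead stays entirely within the univalent-function picture: it observes that $f^{-1}(z/R)$ is a self-map of the unit disk fixing $0$ and applies the Schwarz lemma. Both are clean; the paper's Schwarz-lemma argument is more self-contained (it never needs Green functions or the maximum principle for capacity), while your capacity argument is the more conceptual one and cleanly explains \emph{why} the bound holds, at the cost of needing the capacity-monotonicity lemma which the paper never states. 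Both proofs are valid.
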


From Theorem \ref{cor:obs} and Lemma \ref{lemma:diambound}, we conclude that a large value of $\operatornamewithlimits{diam}(\Omega)$ corresponds to a large value of $|\operatorname{tr}(M)|$, as stated in the theorem below. 
\begin{theorem}\label{theorem:diambound}
For $\lambda=\pm\frac{1}{2}$, we have $$\frac{\pi}{4} (\operatorname{diam}(\Omega))^2\leq \left|\operatorname{tr} ( M)\right| \leq 4 \pi (\operatorname {diam} (\Omega))^2.$$
\end{theorem}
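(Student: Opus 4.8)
The plan is to combine the two results that immediately precede the statement, Theorem~\ref{cor:obs} specialized to $\lambda=\pm\frac12$ and the capacity--diameter estimate of Lemma~\ref{lemma:diambound}. By Theorem~\ref{cor:obs} (or equivalently the direct consequence of Theorem~\ref{theorem:DJM1} noted just after it), when $\lambda=\pm\frac12$ the error term $O(|\lambda|-\frac12)$ vanishes and one has the exact identity $\operatorname{tr}(M)=\pm 4\pi\gamma^2$, hence $|\operatorname{tr}(M)|=4\pi\gamma^2$. So the whole statement reduces to bounding $\gamma^2$ above and below by multiples of $(\operatorname{diam}(\Om))^2$.

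For that, I would simply square the inequalities in Lemma~\ref{lemma:diambound}. Since $\frac14\operatorname{diam}(\Om)\le\gamma\le\operatorname{diam}(\Om)$ and all quantities are nonnegative, squaring is monotone and gives $\frac1{16}(\operatorname{diam}(\Om))^2\le\gamma^2\le(\operatorname{diam}(\Om))^2$. Multiplying through by $4\pi$ yields
\[
\frac{\pi}{4}(\operatorname{diam}(\Om))^2\ \le\ 4\pi\gamma^2\ =\ |\operatorname{tr}(M)|\ \le\ 4\pi(\operatorname{diam}(\Om))^2,
\]
which is exactly the claimed two-sided bound.

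There is essentially no obstacle here — the theorem is a corollary obtained by chaining the preceding results, and the only thing to be careful about is that the $O(|\lambda|-\frac12)$ term in \eqnref{eqn:tr:approx} is genuinely zero at the endpoints $\lambda=\pm\frac12$ (which is guaranteed since at those values $M(\Om,\pm\frac12)$ is given exactly by \eqnref{eq:explicitformula}, whence $\operatorname{tr}(M)=\pm4\pi\gamma^2$ with no remainder). If one wanted to be maximally self-contained one could cite \eqnref{eq:explicitformula} directly for the trace identity rather than routing through Theorem~\ref{cor:obs}; either way the proof is two lines. The only mild subtlety worth a sentence is noting that the bounds in Lemma~\ref{lemma:diambound} are sharp in the sense that the squared constants $\frac{1}{16}$ and $1$ cannot be improved in general (disk versus degenerate slit-type extremal configurations), so the constants $\frac{\pi}{4}$ and $4\pi$ in the theorem are the best possible under this argument.
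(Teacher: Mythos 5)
Your proof is correct and takes the same route the paper intends: the paper states the theorem as a direct consequence of the preceding results without writing out a separate proof, and you have correctly supplied the two-line chain, namely the exact identity $\operatorname{tr}(M)=\pm4\pi\gamma^2$ from \eqnref{eq:explicitformula} combined with the squared form of Lemma~\ref{lemma:diambound}. Your remark that it is cleaner to cite the exact trace identity rather than route through the $O(|\lambda|-\tfrac12)$ estimate of Theorem~\ref{cor:obs} is a sensible refinement of the paper's own cross-reference.
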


Theorem \ref{theorem:diambound} encapsulates the intuition that domains which are close to the upper Hashin--Shtrikman bound are ``thin" and vice versa. The following proposition reinforces this idea by illustrating that ``fat'' domains are close to the lower Hashin--Shtrikman bound for a planar domain with extreme conductivity, \eqnref{2nd:extreme}.
\begin{prop} [Shape change toward a disk]\label{prop:path}
Let $\Omega$ be a simply connected bounded Lipschitz domain with $k=0,\infty$. We denote by $\Om_r$ the family of domains enclosed by the curve $\{z=\Psi(w): |w|=r\}$, $r\geq \gamma$.

Then, as the shape of the domain changes toward a disk, the trace of the PT corresponding to $\Om_r$ converges to the lower Hashin--Shtrikman bound: 
 \beq\label{eqn:Om_r}
 \ds|\Omega_r|\big| \operatorname{tr}\left(M^{-1}(\Om_r)\right)\big| \to 1\quad\mbox{as }r\rightarrow\infty.
 \eeq
On the other hand, the difference between the two eigenvalues of $M(\Om_r)$ is constant independent of $r$:
\beq\label{tau:diff}
\big|\tau_1(\Om_r) - \tau_2(\Om_r)\big| =  4\pi|a_1|\quad\mbox{for all }r\geq\gamma.
\eeq
\end{prop}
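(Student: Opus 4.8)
The plan is to understand the family $\Om_r$ through its exterior conformal map and then apply Theorem~\ref{theorem:DJM1} to each member. First I would identify the conformal map associated with $\Om_r$. Since $\Om_r$ is bounded by $\{\Psi(w):|w|=r\}$, the map $w\mapsto \Psi(rw)$ sends $\{|w|>\gamma/r\}$ onto $\CC\setminus\overline{\Om_r}$ but is not normalized; rescaling, the normalized exterior conformal map of $\Om_r$ is $\Psi_r(w):=\Psi(rw/\gamma)\cdot(\gamma/r)$ on $\{|w|>\gamma\}$... more cleanly, one checks that $\Om_r$ has conformal radius $\gamma_r$ and leading Laurent coefficients obtained from $\Psi(w)=w+a_0+\sum a_n w^{-n}$ by the substitution: writing $\widetilde\Psi(w)=\Psi(rw)/r = w + a_0/r + \sum_{n\ge 1} a_n r^{-(n+1)} w^{-n}$, which is already normalized at infinity, we read off $\gamma_r = \gamma/r$ (conformal radius of the image in the $w$-coordinate scaled back: actually $\gamma_r=\gamma$ since $\widetilde\Psi$ is defined on $\{|w|>\gamma/r\}\cdot$...). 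The key bookkeeping step is simply: $\Om_r$ has conformal radius $\gamma_r = \gamma r /\gamma = r$ — wait — the domain $\{\Psi(w):|w|=r\}$ is the image under $\Psi$ of the circle of radius $r$, so its normalized exterior map is $w\mapsto \Psi(w)$ restricted to $\{|w|>r\}$, which already satisfies $\Psi(\infty)=\infty$, $\Psi'(\infty)=1$, and has Laurent expansion $w+a_0+\sum a_n w^{-n}$; hence the conformal radius of $\Om_r$ is exactly $r$ and its first Laurent coefficient is still $a_1$. This is the crucial and entirely elementary observation.

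With this in hand, Theorem~\ref{theorem:DJM1} gives immediately $M(\Om_r,\pm\tfrac12) = 2\pi\begin{bmatrix} \pm r^2 + \operatorname{Re}(a_1) & \operatorname{Im}(a_1) \\ \operatorname{Im}(a_1) & \pm r^2 - \operatorname{Re}(a_1)\end{bmatrix}$, whose eigenvalues are $\tau_1(\Om_r)=\pm 2\pi(r^2+|a_1|)$ and $\tau_2(\Om_r)=\pm 2\pi(r^2-|a_1|)$ by \eqnref{tau1tau2}. Then $|\tau_1(\Om_r)-\tau_2(\Om_r)| = 4\pi|a_1|$ for every $r\ge\gamma$, proving \eqnref{tau:diff}. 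For \eqnref{eqn:Om_r}, I would use \eqnref{eqn:tr:inv}, which holds verbatim for $\Om_r$ with $\gamma$ replaced by $r$ and $a_1$ unchanged, giving $|\operatorname{tr}(M^{-1}(\Om_r))| = \tfrac1\pi \tfrac{r^2}{r^4-|a_1|^2}$; and \eqnref{Om:vol} for $\Om_r$ reads $|\Om_r| = \pi\big(r^2 - \sum_{n\ge 1} n|a_n|^2 r^{-2n}\big)$. Multiplying, $|\Om_r|\,|\operatorname{tr}(M^{-1}(\Om_r))| = \dfrac{r^2 - \sum_{n\ge1} n|a_n|^2 r^{-2n}}{r^2 - |a_1|^2 r^{-2}} = \dfrac{1 - \sum_{n\ge1} n|a_n|^2 r^{-2n-2}}{1 - |a_1|^2 r^{-4}}$, and as $r\to\infty$ both the numerator and denominator tend to $1$ (the tail sum is dominated by $r^{-4}\sum n|a_n|^2\gamma^{-2n+?}$, which is finite by \eqnref{Om:vol} applied to the original $\Om$ and hence $o(1)$), so the product tends to $1$.

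The only point requiring a little care is the justification that the series $\sum_{n\ge1} n|a_n|^2 r^{-2n-2}$ is genuinely $o(1)$ uniformly enough to take the limit. This follows because, for $r\ge\gamma$, each term is bounded by $n|a_n|^2 \gamma^{-2n}\cdot(\gamma/r)^2 \cdot \gamma^{-2}\cdot r^{-?}$ — more simply, $\sum_{n\ge1} n|a_n|^2 r^{-2n} \le \sum_{n\ge1} n|a_n|^2 \gamma^{-2n} = \gamma^2 - |\Om|/\pi < \infty$ by \eqnref{Om:vol}, so $\sum_{n\ge1} n|a_n|^2 r^{-2n-2} \le r^{-2}\big(\gamma^2 - |\Om|/\pi\big) \to 0$. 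I do not anticipate a genuine obstacle here; the proof is essentially a clean application of Theorem~\ref{theorem:DJM1} once the conformal map of $\Om_r$ is correctly identified, and the main (minor) subtlety is just the convergence bookkeeping in the last limit.
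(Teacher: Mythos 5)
Your proof is correct and follows essentially the same route as the paper, whose own proof is simply a citation to equations \eqnref{eqn:tr:inv}, \eqnref{Om:vol}, and \eqnref{tau1tau2}. You correctly identify the one nontrivial bookkeeping fact — that $\Psi$ restricted to $\{|w|>r\}$ is already the normalized exterior conformal map of $\Om_r$, so $\Om_r$ has conformal radius $r$ and the \emph{same} Laurent coefficients $a_n$ — and then the trace, eigenvalue, and volume formulas apply verbatim with $\gamma$ replaced by $r$, with the tail sum $\sum_{n\geq 1} n|a_n|^2 r^{-2n-2}$ controlled uniformly by the finite constant $\gamma^2-|\Om|/\pi$ as you note.
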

\begin{proof}
From \eqnref{eqn:tr:inv}, \eqnref{Om:vol} and \eqnref{tau1tau2}, one can easily find 
\eqnref{eqn:Om_r} and \eqnref{tau:diff}.
\end{proof}

While the family of domains $\Om_r$ has the polarization tensor gradually changes as $r\rightarrow\infty$, 
one can modify the shape of any domain to have a large diameter by stretching only a small portion of the domain; see, e.g., Figure \ref{fig:insta}. Hence, we observe the following as a direct consequence of Theorems \ref{cor:obs} and \ref{theorem:diambound}:
\begin{cor} [Instability with respect to the shape perturbation]
Let $\lambda \approx \pm \frac{1}{2}$.
For any Lipschitz domain $\Om$ and $N\in\NN$, there exists a shape perturbation of $\Om$, namely $\widetilde{\Om}$, such that $$\left|\Om\triangle\widetilde{\Omega}\right|<\frac{1}{N}\quad\mbox{and}\quad \left|\operatorname{tr}(M(\Om,\lambda))-\operatorname{tr}(M(\widetilde{\Om},\lambda))\right|>N.$$
\end{cor}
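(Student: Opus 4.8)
The plan is to realize $\widetilde{\Om}$ as a small ``bump'' or ``spike'' attached to $\Om$, chosen so that the symmetric difference has area below $1/N$ while the diameter is forced to exceed some large value $L=L(N)$. Concretely, I would fix a boundary point $p\in\p\Om$ and a direction $\nu$ pointing outward there, and attach to $\Om$ a thin tendril (for definiteness, a long narrow rectangle, or a slender triangular sliver to keep the Lipschitz character intact) of length $L$ and width $w$. The area of this tendril is $O(wL)$, so choosing $w$ small enough (for fixed $L$) makes $|\Om\triangle\widetilde{\Om}|=O(wL)<1/N$. On the other hand, the tendril reaches a point at distance $\approx L$ from $p$, and since $\Om$ already contains points on the far side, $\operatorname{diam}(\widetilde{\Om})\geq L - \operatorname{diam}(\Om)$, hence $\operatorname{diam}(\widetilde{\Om})\geq L/2$ once $L$ is large relative to $\operatorname{diam}(\Om)$.

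The second ingredient is to convert the large diameter into a large value of $|\operatorname{tr}(M(\widetilde{\Om},\lambda))|$. Here I would invoke Theorem \ref{cor:obs}: since $\lambda\approx\pm\frac12$, we have $\operatorname{tr}(M(\widetilde{\Om},\lambda))=\pm4\pi\gamma(\widetilde{\Om})^2+O(|\lambda|-\frac12)$, where $\gamma(\widetilde{\Om})$ is the logarithmic capacity of $\p\widetilde{\Om}$. By Lemma \ref{lemma:diambound}, $\gamma(\widetilde{\Om})\geq\frac14\operatorname{diam}(\widetilde{\Om})\geq L/8$. Combining, $|\operatorname{tr}(M(\widetilde{\Om},\lambda))|\geq 4\pi(L/8)^2 - C(|\lambda|-\frac12)$ for a universal constant $C$. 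We also need to control $|\operatorname{tr}(M(\Om,\lambda))|$, but this is a fixed finite quantity depending only on $\Om$ and $\lambda$ (again bounded via Theorem \ref{cor:obs} and Lemma \ref{lemma:diambound} in terms of $\operatorname{diam}(\Om)$). Therefore, choosing $L$ sufficiently large — specifically so that $\frac{\pi}{16}L^2 > N + |\operatorname{tr}(M(\Om,\lambda))| + C(|\lambda|-\frac12)$ — guarantees $|\operatorname{tr}(M(\Om,\lambda))-\operatorname{tr}(M(\widetilde{\Om},\lambda))|>N$. Then, with $L$ now fixed, shrink the tendril width $w$ until $|\Om\triangle\widetilde{\Om}|<1/N$; this does not affect the diameter bound.

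The main obstacle I anticipate is a subtle one: Theorem \ref{cor:obs} is an asymptotic statement ``for $\lambda\approx\pm\frac12$'', and the implicit constant in the $O(|\lambda|-\frac12)$ term depends, through the spectral-resolution estimate in its proof, on the domain; one must check that this constant can be taken uniform over the family $\{\widetilde{\Om}\}$ of perturbations, or else argue directly. The cleanest fix is to restrict attention to $\lambda=\pm\frac12$ exactly, where Theorem \ref{theorem:DJM1} gives the clean identity $\operatorname{tr}(M(\widetilde{\Om},\pm\frac12))=\pm4\pi\gamma(\widetilde{\Om})^2$ with no error term, and then handle general $\lambda\approx\pm\frac12$ by noting that the perturbation family can be built once and for all (width depending on $N$ only), with the $\lambda$-dependence absorbed since $|\lambda|-\frac12$ is a fixed small number in the statement. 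A second, more routine point is ensuring $\widetilde{\Om}$ is genuinely Lipschitz and simply connected: attaching a convex sliver along a flat piece of $\p\Om$ (or smoothing the joint) suffices, and one should remark that every Lipschitz domain contains a boundary point near which $\p\Om$ is a Lipschitz graph, giving room to attach the tendril.
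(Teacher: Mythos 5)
Your proposal matches the paper's argument exactly: the paper presents this corollary as a direct consequence of Theorems \ref{cor:obs} and \ref{theorem:diambound}, illustrated only by a figure of a disk with a thin spike appended --- precisely the tendril construction you describe, with small area but large diameter, and hence (via Lemma \ref{lemma:diambound}) large capacity and large $|\operatorname{tr}(M)|$. The caveat you raise about the domain-dependence of the implicit constant in Theorem \ref{cor:obs} is legitimate and is glossed over in the paper's informal presentation; your fallback of working at $\lambda=\pm\tfrac12$ exactly via Theorem \ref{theorem:diambound}, where the identity is exact, is the cleanest reading of what the paper intends.
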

\begin{figure}[h!]
	\hskip 3cm
	\includegraphics[scale=0.36]{./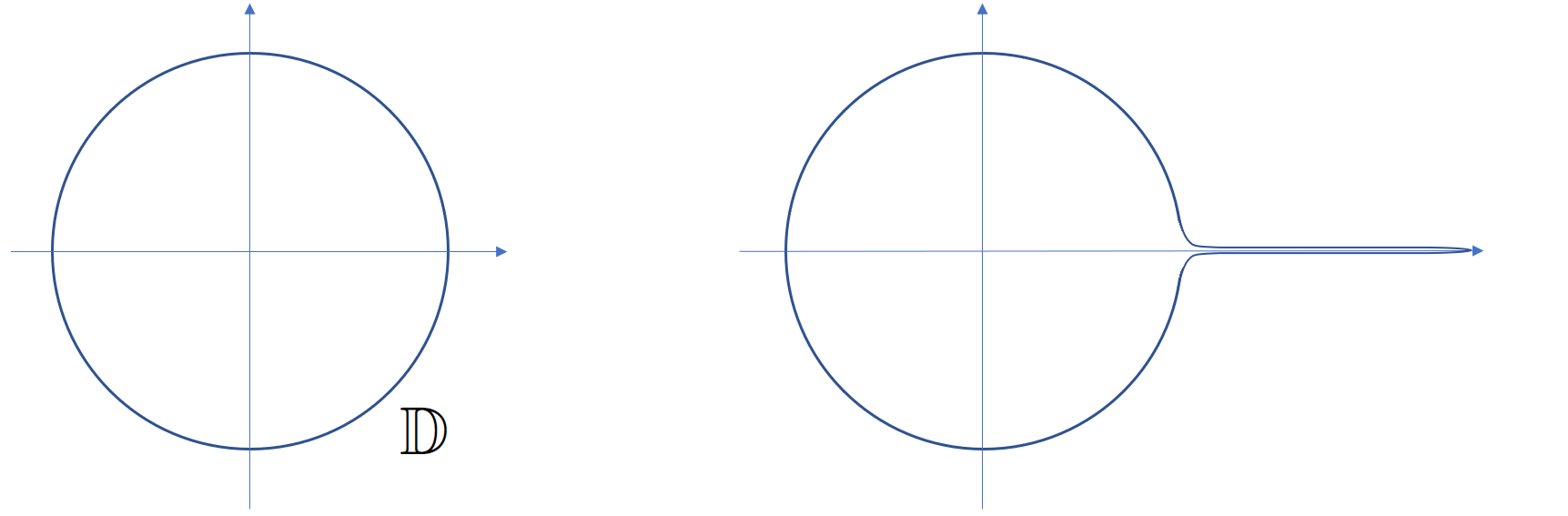}
	\caption{A disk (left figure) and its shape perturbation (right figure). While the perturbation is small in area sense, there is a big difference in the diameters of the two domains.}
	\label{fig:insta}
\end{figure}

\noindent{\textbf{Proof of Lemma \ref{lemma:diambound}.} 
Set $R:=\mbox{diam}(\Om)=\sup_{x,y\in\Om}|x-y|$. Fix a point $z_0\in\Om$. 
The map $f:\mathbb{D} \rightarrow \CC$ given by 
\begin{align*}
f(z):=\frac{1}{\Psi(\frac{\gamma}{z})-z_0}
=\frac{z}{\gamma}\Big(1-\frac{a_0-z_0}{\gamma}z+\mbox{higher order terms}\Big)
\end{align*}
with $f(0)=0$ is a conformal map from the unit disk onto the range of $f$ satisfying $f'(0)=\frac{1}{\gamma}$. 
From the Bieberbach's conjecture for the second coefficient, we have $|a_0-z_0| \leq 2\gamma$. Since $z_0 \in \Omega$ is arbitrary, we conclude that $R \leq 4\gamma$.
\smallskip

Note that $\big\{\Psi\big(\frac{\gamma}{z}\big)-z_0:|z|<1\big\}=\CC\setminus(\overline{\Om}-z_0)\supset\{z:|z|>R\}$. 
We then observe that the map
$ z \mapsto f^{-1}\big(\frac{z}{R}\big)$ is a complex analytic function from the unit disk to the unit disk, and sends $0$ to $0$. From the Schwarz lemma, we have $|f'(0)R|^{-1}\leq 1$, i.e., $\gamma \leq R$. 
\qed

\section{Effective conductivity of dilute composites}\label{sec:Effective}

In this section we determine the effective or macroscopic property of a two-phase medium by using Theorem \ref{theorem:DJM1}.

\subsection{Asymptotic formula for the effective conductivity}
Let $Y=]-\frac{1}{2},\frac{1}{2}[^2$ denote the unit cell in $\RR^2$ and $D=\rho B$, $0<\rho<1$, be a subset of $Y$, where $B$ is a reference bounded Lipschitz domain containing the origin with $|B|=1$. Let the conductivity $\sigma$ be periodic with the periodic cell $Y$ and, on $Y$,
$$\sigma=1+(k-1)\chi(D),$$
i.e., $Y\setminus\overline{D}$ and $D$ have the constant conductivities $1$ and $k$ ($0<k\neq1<+\infty$), respectively. The effective conductivity, namely $\sigma^*=({\sigma}^*_{ij})_{i,j=1,2}$, of the periodic conductivity $\sigma\big(\frac{x}{\ep}\big)$ as $\ep\rightarrow 0$ is then defined to be (see, e.g., \cite{Jikov:1994:HDO:book})
\beq\label{effec:integral}
\sigma^*_{ij}=\int_{Y}\sigma\nabla u_i\cdot\nabla u_j\,dx,
\eeq
where $u_i$ ($i=1,2$) is the unique solution to 
\beq\label{eq:effectiveconductivitypde}
\begin{cases}
\ds\nabla\cdot\sigma\nabla u_i=0\quad\mbox{in }Y,\\
\ds u_i-y_i\mbox{ is periodic},\\
\ds \int_Y u_i\, dx=0.
\end{cases}
\eeq

When $B$ is a ball, the corresponding effective conductivity is given by the Maxwell--Garnett formula \cite{Sangani:1990:CNC}
$$\sigma^*=1+\rho^2\frac{2(k-1)}{k+1}+2\rho^4\frac{(k-1)^2}{(k+1)^2}+o(\rho^4).$$
This formula has been extended to inclusions of general shape with Lipschitz boundaries \cite[Theorem 5.1]{Ammari:2005:BLT}:
\beq\label{formula:effective:ammari}
 {\sigma}^*=\left(I+\rho^{2}M+\frac{\rho^{4}}{2}M^2\right)+O(\rho^{6}),
\eeq
where $M$ is the P\'{o}lya--Szeg\"{o} polarization tensor corresponding to $B$ and conductivity $k$ with the background conductivity $1$.

By applying Theorem \ref{theorem:DJM1} to \eqnref{formula:effective:ammari}, we have the following. 
\begin{theorem}\label{thm:effective}
Let $D=\rho \Om$, $\rho^2{|\Om|}<1$, with the conductivity $k=0$ or $\infty$ (i.e., $\lambda=\pm\frac{1}{2}$). Then, $\sigma^*=\sigma^*(D,\lambda)$ admits the asymptotic expansion
\begin{align}\notag
\sigma^*\left(D,\pm 1/2\right)=I+ 2\pi\rho^2&\begin{bmatrix}
\ds\pm \gamma^2 + \operatorname{Re}( a_1) & \ds\operatorname{Im}( a_1) \\[1.5mm]
\ds\operatorname{Im}( a_1) & \ds \pm \gamma^2 - \operatorname{Re}( a_1)
\end{bmatrix}\\[2mm]
+2\pi^2\rho^4&\begin{bmatrix}
\ds\gamma^4\pm2\gamma^2\operatorname{Re}(a_1)+| a_1|^2 &\ds \pm2\gamma^2\operatorname{Im}( a_1)\\[1.5mm]
\ds\pm 2\gamma^2\operatorname{Im}( a_1)  & \ds\gamma^4\mp 2\gamma^2\operatorname{Re}( a_1)+| a_1|^2
\end{bmatrix} + O(\rho^6|\Om|^3).\label{eq:effectiveconductivity}
\end{align}
Recall that $|\Om|$ is given by $
|\Om|=\pi \left(\gamma ^2 - \sum_{n \geq 1} n \frac{|a_n|^2}{\gamma^{2n}}\right)$. 
\end{theorem}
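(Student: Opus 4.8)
The plan is to substitute the explicit formula from Theorem~\ref{theorem:DJM1} directly into the asymptotic expansion \eqnref{formula:effective:ammari}, so the proof is essentially a bookkeeping computation with one small structural observation. First I would set $\lambda=\pm\frac12$, so that by Theorem~\ref{theorem:DJM1} the polarization tensor of $\Om$ is
\[
M(\Om,\pm\tfrac12)=2\pi\begin{bmatrix}\pm\gamma^2+\operatorname{Re}(a_1)&\operatorname{Im}(a_1)\\[1mm]\operatorname{Im}(a_1)&\pm\gamma^2-\operatorname{Re}(a_1)\end{bmatrix}.
\]
Since $D=\rho\Om$ with $B:=|\Om|^{-1/2}\Om$ playing the role of the reference domain of unit area and $\rho$ rescaled accordingly, the hypothesis $\rho^2|\Om|<1$ guarantees $D\subset Y$ and puts us in the regime where \eqnref{formula:effective:ammari} applies; the error term $O(\rho^6)$ there becomes $O(\rho^6|\Om|^3)$ after tracking the normalization, which is why the stated remainder has that form.

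The only computation of substance is squaring the $2\times2$ matrix $M$. Writing $M=2\pi\begin{bmatrix}\pm\gamma^2+p&q\\ q&\pm\gamma^2-p\end{bmatrix}$ with $p=\operatorname{Re}(a_1)$, $q=\operatorname{Im}(a_1)$, one checks that
\[
M^2=4\pi^2\begin{bmatrix}(\pm\gamma^2+p)^2+q^2 & q(\pm\gamma^2+p)+q(\pm\gamma^2-p)\\[1mm] q(\pm\gamma^2+p)+q(\pm\gamma^2-p) & q^2+(\pm\gamma^2-p)^2\end{bmatrix}=4\pi^2\begin{bmatrix}\gamma^4\pm2\gamma^2 p+|a_1|^2 & \pm2\gamma^2 q\\[1mm] \pm2\gamma^2 q & \gamma^4\mp2\gamma^2 p+|a_1|^2\end{bmatrix},
\]
using $p^2+q^2=|a_1|^2$. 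Multiplying by the prefactor $\tfrac{\rho^4}{2}$ from \eqnref{formula:effective:ammari} gives $2\pi^2\rho^4$ times the displayed matrix, which matches the fourth-order term in \eqnref{eq:effectiveconductivity}; the second-order term $\rho^2 M$ is immediate. Finally I would recall \eqnref{Om:vol} to record the formula for $|\Om|$ that appears in the statement.

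There is no real obstacle here — the result is a corollary obtained by combining two earlier results — but the one point that deserves a word of care is the precise meaning of the rescaling between \eqnref{formula:effective:ammari}, which is stated for a reference domain $B$ of unit area with dilation parameter $\rho$, and the present setup where we dilate the (not-necessarily-unit-area) domain $\Om$. Writing $\Om=|\Om|^{1/2}B$ and absorbing the factor $|\Om|^{1/2}$ into the dilation shows that the effective second- and fourth-order coefficients are $|\Om|M(B,\lambda)$ and $|\Om|^2 M(B,\lambda)^2/2$; since $M$ scales linearly with area and Theorem~\ref{theorem:DJM1} is stated directly for $\Om$ (whose capacity is $\gamma$ and whose conformal coefficient is $a_1$), the factors $|\Om|$ recombine to give exactly $\rho^2 M(\Om,\lambda)$ and $\tfrac{\rho^4}{2}M(\Om,\lambda)^2$, with the remainder inflated to $O(\rho^6|\Om|^3)$. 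Once this is noted, the rest is the routine matrix algebra above.
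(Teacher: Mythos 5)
Your proposal is correct and follows essentially the same route as the paper: apply Theorem~\ref{theorem:DJM1} to get the explicit PT, handle the non-unit-area issue by rescaling to $\widetilde{\Om}=\Om/|\Om|^{1/2}$ with dilation parameter $|\Om|^{1/2}\rho$ (which is exactly what the paper does), and substitute into \eqnref{formula:effective:ammari}. The only difference is that you carry out the $2\times 2$ matrix-squaring explicitly, which the paper leaves to the reader, and your computation there is correct.
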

\begin{proof}
The scaled domain $\widetilde{\Om}:=\frac{\Om}{|\Om|^{\frac 1 2}}$ (hence, $|\widetilde{\Om}|=1$) has the exterior conformal mapping 
$$\widetilde{\Psi}(w)=w+\frac{a_0}{|\Om|^{\frac 1 2}}+\sum_{n=1}^\infty\frac{a_n}{|\Om|^{\frac {n+1} 2 }}\frac{1}{w^n}\quad\mbox{for }|w|>\widetilde{\gamma}:=\frac{\gamma}{|\Om|^{\frac 1 2}}.$$ 
The polarization tensor of $\widetilde{\Om}$ is then given by
 \begin{equation} 
\widetilde{M}:=M(\widetilde{\Om},\pm\frac{1}{2}) = \frac{2\pi}{|\Om|} \begin{bmatrix}
\pm \gamma^2 + \text{Re}( a_1) & \text{Im}( a_1) \\[1.5mm]
\text{Im}( a_1) & \pm \gamma^2 - \text{Re}( a_1)
\end{bmatrix}.
\end{equation}
From \eqnref{formula:effective:ammari} with $\rho$ replaced by $|\Om|^{\frac{1}{2}}\rho$, 
this completes the proof. 
\end{proof}

Let us denote the lower order terms of $\sigma^*-I$ as $A=A(\rho, \gamma, a_1,\lambda)$ so that 
\beq\label{def:A}
\sigma^*=I+A+O(\rho^6|\Om|^3).
\eeq
The matrix $A$ has the trace and the determinant as follows:
\begin{align*}
	 \operatorname{tr}(A) &= \pm 4\pi \gamma^2 \rho^2  + 4 \pi^2 \left(\gamma^4 +|a_1|^2\right) \rho^4 ,\\
\det(A) &= 4 \pi^2 \rho^4 \left(\gamma ^4 -|a_1|^2\right)   \pm 8 \pi^3 \gamma^2 \rho^6 \left( \gamma^4 - |a_1|^2 \right)  + 4 \pi^4 \rho^8 \left( \gamma^4 - |a_1|^2 \right)^2.
	\end{align*}
In view of \eqnref{a_1:ineq}, $A$ is invertible if $|a_1|<\gamma^2$ and $\rho$ is sufficiently small. 
Assuming further that $k=\infty$, we have
\begin{align*}
\operatorname{tr}(A^{-1})
&= \frac{1+2\pi\gamma^2 - \rho^2  X}{\rho^2 X+2\pi\gamma^2\rho^4X+\pi \rho^6 \gamma^2 X^2},\ X=\pi\left(\gamma^2 - \frac{|a_1|^2}{\gamma^2}\right)\\
&\leq \frac{1+2\pi\gamma^2 - \rho^2  |\Om|}{\rho^2 |\Om|+2\pi\gamma^2\rho^4|\Om|+\pi \rho^6 \gamma^2 |\Om|^2},
\end{align*}
where the equality holds if and only if $a_n =0$ for all $n \geq 2$. In other words, assuming that $\gamma$, $\rho$ and $|\Om|$ are fixed, $\operatorname{tr}(A^{-1})$ attains the maximum when $D$ is an ellipse. 
This can be considered an extension of the Polya-Szeg\"{o} conjecture to the higher-order expansion of the effective conductivity.

\subsection{Examples}
We provide the asymptotic expansions of the effective conductivity for some simple polygons, whose exterior conformal mapping can be easily expressed as the Schwarz--Christoffel integral. 
\subsubsection{A rectangle}
Consider a rectangle $\Om$ with width $\sqrt{3}$ and height $\frac{1}{\sqrt{3}}$ centered at the origin, where the vertices are given by $\frac{\sqrt 3}  2 \pm\frac 1 {2 \sqrt 3}i$ and $-\frac{\sqrt 3}  2 \pm \frac 1 {2 \sqrt 3}i$.
By numerically approximating the corresponding Schwarz--Christoffel mapping using the MATLAB SC Toolbox, we have
$a_1 \simeq  0.20439$ and $\gamma \simeq 0.66273.$
This leads to the asymptotic expansion of the effective conductivity for $D=\rho\Om$:
\begin{align*}
&\sigma^*\left(D,1/2\right)= I + 
\begin{bmatrix}
 4.0438 & 0 \\
  0 &  1.4754\\
\end{bmatrix} \rho^2 +\begin{bmatrix}
 8.1763  &                 0\\
0   &1.0885
\end{bmatrix} \rho^4 + O(\rho^6),\\
&\sigma^*\left(D,- 1/2\right) = I + 
\begin{bmatrix}
-1.4754    &               0\\
0 & -4.0438
\end{bmatrix} \rho^2 +\begin{bmatrix}
  1.0885     &             0\\
0  & 8.1763
\end{bmatrix} \rho^4 + O(\rho^6).
\end{align*}

In Figure \ref{rectangle:n-gon}, we illustrate the results on a rectangle and a regular $4$-gon. We compare the leading term in the asymptotic expansion in \eqnref{eq:effectiveconductivity} with the effective conductivity obtained by evaluating \eqnref{eq:effectiveconductivitypde}, for which we numerically solve the PDE problem \eqref{eq:effectiveconductivitypde} by a finite difference method (FDM). The conductivity is set to be $k=0$.  The left figure illustrates that the $\rho^4$ term, although negligible for $\rho \simeq 0$, is indeed significant for mid-range $\rho$ values.

\begin{figure}[h!]  \centering

	\begin{subfigure}{0.505\linewidth}	
		\hskip -.95cm
		\includegraphics[scale=0.187]{./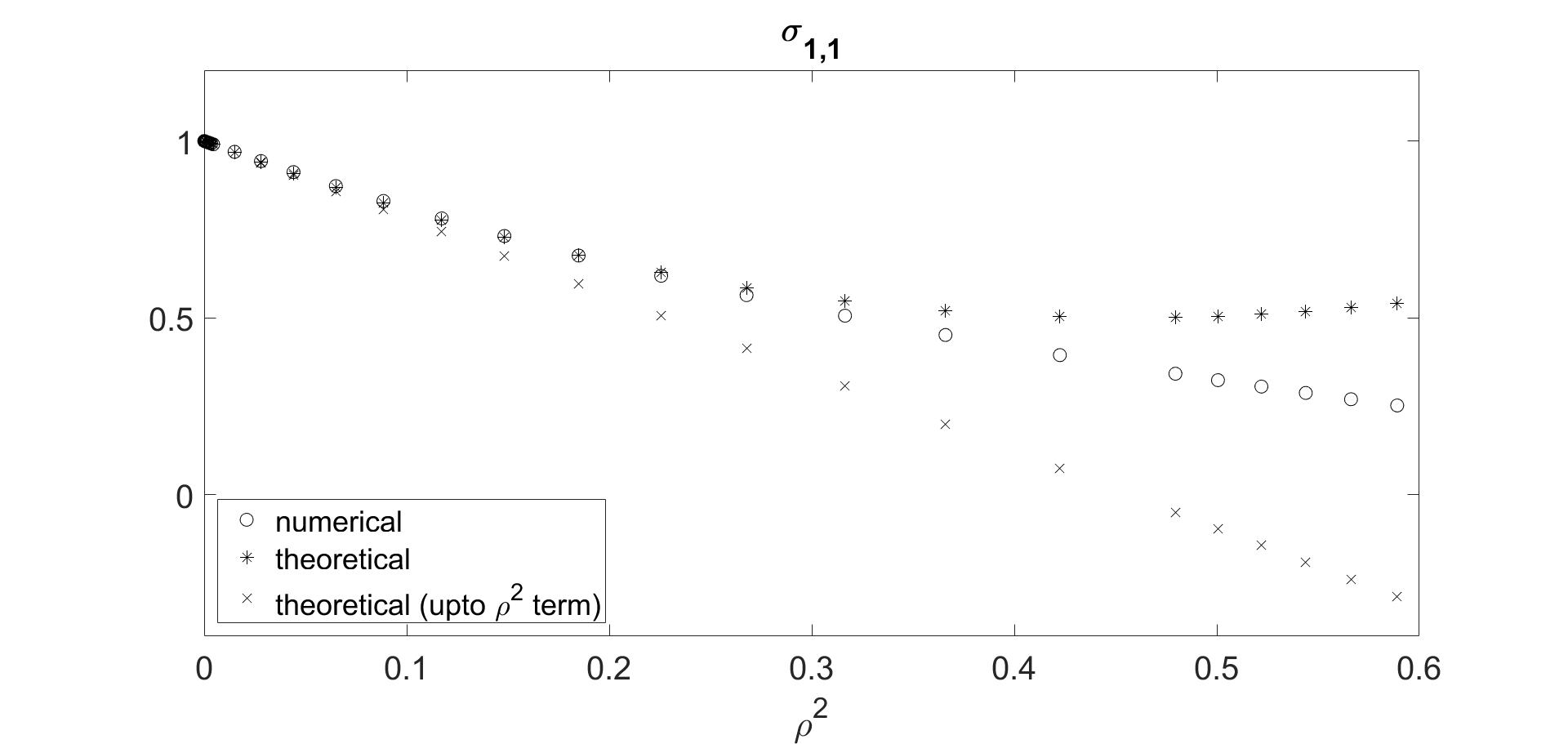}
	\end{subfigure}%
	\begin{subfigure}{\linewidth}	
	\hskip -.07cm
		\includegraphics[trim={4cm 0 0 0},clip, scale=0.187]{./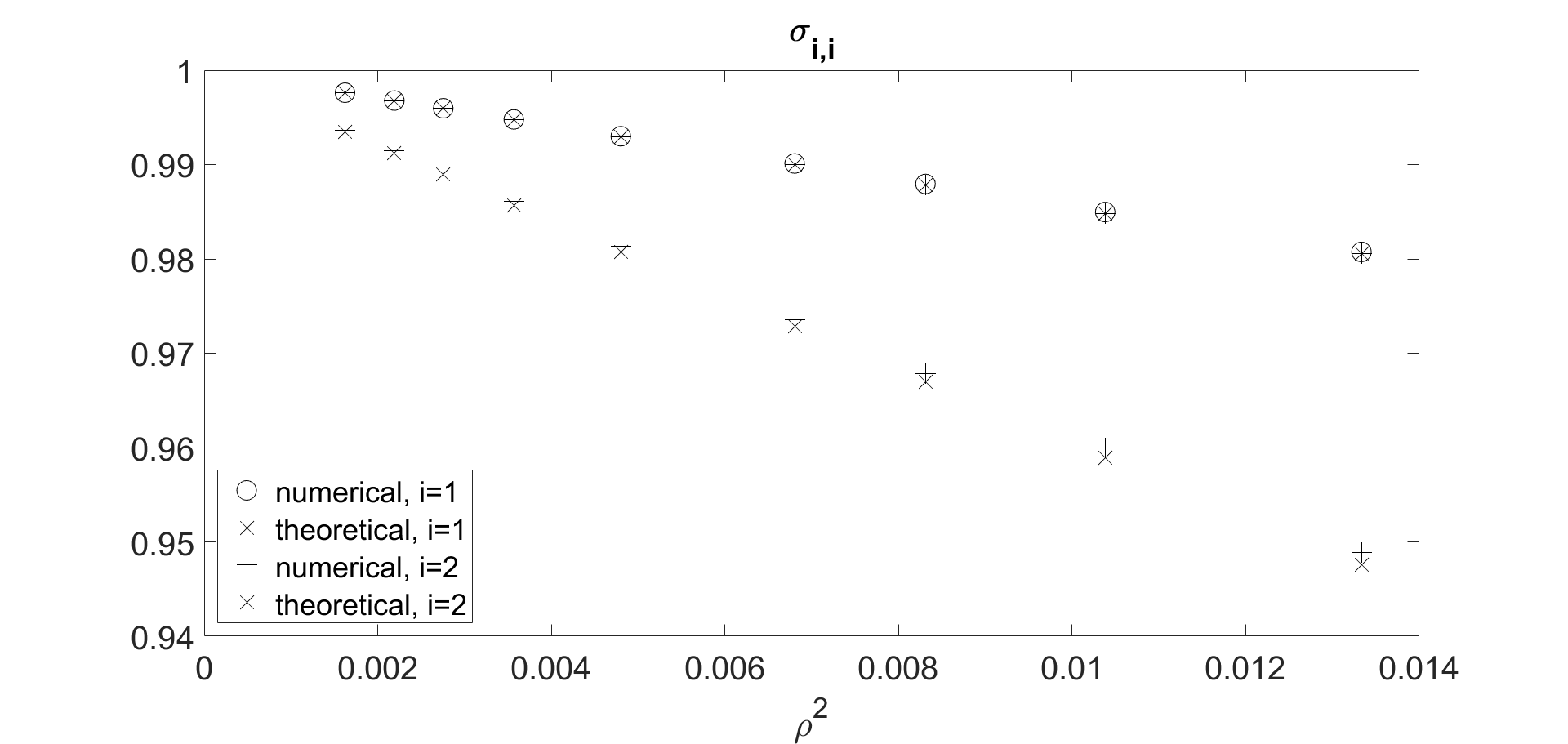}
	\end{subfigure}
	\caption{Numerical approximations of $\sigma_{i,i}$, $i=1,2$ for a rectangle width $\sqrt{3}$ and height $\frac{1}{\sqrt{3}}$ (left figure) and a regular $4$-gon (right figure). The edges are set parallel to the axes. In the left figure, $\sigma_{1,1}=\sigma_{2,2}$ by symmetry.
	}\label{rectangle:n-gon}
\end{figure}

}

\subsubsection{Regular $n$-gons}

\begin{lemma}\label{lemma:symmetric}
For $\Om$ with rotation symmetry of angle $\theta$ satsifying $e^{2i\theta}\neq1$, we have $a_1=0$ so that $I+A$ is diagonal (see \eqnref{def:A}). 
\end{lemma}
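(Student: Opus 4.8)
The plan is to exploit the uniqueness part of the Riemann mapping theorem together with the uniqueness of the Laurent expansion \eqnref{def:Psi}. Suppose $\Om$ has a rotational symmetry: there is an angle $\theta$ with $e^{2i\theta}\neq1$ (so in particular $e^{i\theta}\neq\pm1$, which also covers the relevant $n$-gon cases with $\theta=2\pi/n$, $n\geq3$) such that $e^{i\theta}\Om=\Om$, hence $e^{i\theta}(\CC\setminus\overline\Om)=\CC\setminus\overline\Om$. First I would write down the conformal map $\Phi(w):=e^{-i\theta}\Psi(e^{i\theta}w)$ and check that it is again a conformal map from $\{|w|>\gamma\}$ onto $\CC\setminus\overline\Om$ with $\Phi(\infty)=\infty$ and $\Phi'(\infty)=1$. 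By the uniqueness clause of the Riemann mapping theorem (as quoted just before \eqnref{def:Psi}), $\Phi=\Psi$.

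Next I would read off the consequence for the Laurent coefficients. Substituting the expansion \eqnref{def:Psi} into $\Phi(w)=e^{-i\theta}\Psi(e^{i\theta}w)$ gives
\begin{equation}\notag
\Phi(w)=w+e^{-i\theta}a_0+\sum_{n=1}^\infty e^{-i(n+1)\theta}\frac{a_n}{w^n}.
\end{equation}
Matching this with $\Psi(w)=w+a_0+\sum_{n\geq1}a_n w^{-n}$ coefficient by coefficient (uniqueness of Laurent series) yields $a_n=e^{-i(n+1)\theta}a_n$ for every $n\geq0$. For $n=1$ this reads $a_1=e^{-2i\theta}a_1$, and since $e^{2i\theta}\neq1$ we conclude $a_1=0$.

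Finally, feeding $a_1=0$ into Theorem \ref{theorem:DJM1} shows the polarization tensor $M(\Om,\pm\frac12)$ is a scalar multiple of the identity, and inspecting the expression for $A$ in \eqnref{def:A}--\eqnref{eq:effectiveconductivity} (all off-diagonal entries carry a factor $\operatorname{Re}(a_1)$ or $\operatorname{Im}(a_1)$, hence vanish, and the two diagonal entries become equal) shows $I+A$ is diagonal, indeed scalar. I do not anticipate a serious obstacle here; the only point that needs a line of care is the regularity hypothesis under which Theorem \ref{theorem:DJM1} is invoked — but that theorem is stated for arbitrary simply connected bounded Lipschitz domains, and regular $n$-gons are such domains, so no extra assumption is needed. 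One could also remark that the same argument gives $a_n=0$ whenever $e^{i(n+1)\theta}\neq1$, which for $\theta=2\pi/n$ kills all coefficients except those with index $\equiv -1 \pmod n$, explaining the sparsity observed in the $n$-gon examples.
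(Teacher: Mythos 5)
Your proof is correct and takes essentially the same route as the paper's: both appeal to the uniqueness clause of the Riemann mapping theorem to identify $\Psi$ with its rotate, and read off $a_1 e^{\pm 2i\theta}=a_1$ from the Laurent expansion. The only stylistic difference is that the paper conjugates in the opposite direction ($e^{i\theta}\Psi(e^{-i\theta}z)$ versus your $e^{-i\theta}\Psi(e^{i\theta}w)$), which is immaterial; it also explicitly states ``we may assume the rotational center is at the origin,'' a normalization you use implicitly when writing $e^{i\theta}\Om=\Om$ — worth a word since the Laurent coefficients $a_n$ for $n\geq 1$ are translation-invariant, so this loses no generality.
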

\begin{proof} 
	We may assume the rotational center of the domain is at the origin. For such a domain $\Om$ and its associated exterior conformal mapping $\Psi[\Om]$, $e^{i \theta} \Psi[\Omega] \big(e^{-i \theta} z\big) $ is also an exterior conformal map. In particular, this map equals $\Psi[\Omega](z)$ and it holds that $a_1 e^{2i \theta } = a_1$. This implies $a_1=0$.
\end{proof}Lemma \ref{lemma:symmetric} holds in particular for domains with rotational symmetry of order greater than $2$. Let $\Om$ be a regular $n$-gon domain with side length $L$, then $a_1=0$ from Lemma \ref{lemma:symmetric}. From \cite{Polya:1951:IIM:book} and \cite[Corollary 11.1]{Pommerenke:1975:UF:book}, we have
\beq\label{gamma:n}
\gamma = \frac 1 {4 \pi} \frac{\Gamma ^2\left(\frac 1 n\right)}	{\Gamma \left(\frac 2 n\right)} L, \quad\mbox{where }L=\sqrt{\frac{4 \tan (\frac \pi n)}{n}} \mbox{ assuming }|\Om|=1.
\eeq
 
 \begin{lemma}\label{n_gon:gamma:deri}
The conformal radius $\gamma(n)$ of a regular $n$-gon with unit area, given by \eqnref{gamma:n}, strictly decreases to $\frac{1}{\sqrt{\pi}}$ as $n$ increases to $\infty$. Note that $\frac{1}{\sqrt{\pi}}$ is the conformal radius of a disk with unit area. \end{lemma}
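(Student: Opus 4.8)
The plan is to treat $\gamma(n)$ as a function of a real variable $n \in [3,\infty)$ via the Gamma function and analyze it using logarithmic differentiation. Writing
\[
\gamma(n) = \frac{1}{4\pi}\,\frac{\Gamma^2(1/n)}{\Gamma(2/n)}\,\sqrt{\frac{4\tan(\pi/n)}{n}},
\]
I would first take the logarithm,
\[
\ln\gamma(n) = -\ln(4\pi) + 2\ln\Gamma(1/n) - \ln\Gamma(2/n) + \tfrac12\ln\tan(\pi/n) - \tfrac12\ln n + \ln 2,
\]
and differentiate in $n$. Setting $t = 1/n$ and using the digamma function $\psi = \Gamma'/\Gamma$, the derivative becomes a combination of $\psi(t)$, $\psi(2t)$, the term coming from $\tfrac12\ln\tan(\pi t)$, and $\tfrac12 t$ (after converting $d/dn = -t^2\,d/dt$). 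The goal of this step is to show $\frac{d}{dn}\gamma(n) < 0$ for all $n \ge 3$, equivalently that the bracketed $t$-derivative is positive on $(0,1/3]$.

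Next I would handle the limit $n \to \infty$. As $t = 1/n \to 0$ one has $\Gamma(t) \sim 1/t$, so $\Gamma^2(1/n)/\Gamma(2/n) \sim (1/t^2)/(1/(2t)) = 2/t = 2n$, while $\sqrt{4\tan(\pi/n)/n} \sim \sqrt{4\pi/n^2} = 2\sqrt{\pi}/n$; hence $\gamma(n) \sim \frac{1}{4\pi}\cdot 2n \cdot \frac{2\sqrt{\pi}}{n} = \frac{1}{\sqrt{\pi}}$. (To be careful I would use $\Gamma(t) = \frac1t - \gamma_E + O(t)$ and $\tan(\pi t) = \pi t + O(t^3)$ to confirm the subleading corrections vanish.) The value $1/\sqrt{\pi}$ is the conformal radius of the disk of unit area, since such a disk has radius $1/\sqrt{\pi}$ and the exterior conformal map of a disk of radius $R$ centered at the origin is $\Psi(w) = w$ scaled appropriately with $\gamma = R$.

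The main obstacle is the monotonicity claim: proving that the derivative has a constant sign on all of $[3,\infty)$ requires controlling the difference $2\psi(t) - 2\psi(2t)$ (which is negative, since $\psi$ is increasing) against the positive contribution of the $\tan$ term and the $-\tfrac12\ln n$ term. I would use the integral/series representation $\psi(t) = -\gamma_E + \sum_{m\ge 0}\left(\frac{1}{m+1} - \frac{1}{m+t}\right)$, or the duplication formula $\psi(2t) = \tfrac12\psi(t) + \tfrac12\psi(t+\tfrac12) + \ln 2$, which rewrites $2\psi(t) - \psi(2t) = \tfrac32\psi(t) - \tfrac12\psi(t+\tfrac12) - \ln2$; combined with convexity/monotonicity bounds on $\psi$ on $(0,1/2]$ and the elementary estimate $\frac{d}{dt}\tfrac12\ln\tan(\pi t) = \frac{\pi}{2\sin(\pi t)\cos(\pi t)} = \frac{\pi}{\sin(2\pi t)} \ge \pi$ for $t \in (0,1/2]$, one should be able to close the inequality. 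If a clean closed-form argument proves elusive on the full range, a fallback is to verify the derivative sign analytically for large $n$ (via the asymptotic expansion of $\psi$) and confirm the finitely many remaining integer and small-$n$ cases — but I expect the $\psi$-duplication approach to give a uniform proof, and that is the step I would spend the most effort on.
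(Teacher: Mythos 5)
Your overall framework matches the paper's: both take the logarithmic derivative of $\gamma$, express it via the digamma function $\psi = \Gamma'/\Gamma$, and obtain the limit $1/\sqrt{\pi}$ from $\Gamma(t)\sim 1/t$ (your limit computation is correct). The difference is in how the sign of the derivative is established, and that is where your sketch has a real gap.

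After log-differentiation (with $t=1/n$) one needs
\[
\psi(t) - \psi(t+\tfrac12) - 2\ln 2 + \frac{\pi}{\sin(2\pi t)} + \frac{1}{2t} > 0 \quad \text{for } t\in(0,\tfrac13],
\]
which is your duplication-formula rewriting of $2\psi(t)-2\psi(2t)+\pi/\sin(2\pi t)+1/(2t)$. The sketch proposes to control the negative $\psi$-part by monotonicity/convexity of $\psi$ and the positive part by the crude bound $\pi/\sin(2\pi t)\geq\pi$. This cannot work as stated: as $t\to 0^+$ one has $\psi(t)-\psi(t+\tfrac12)+\tfrac{1}{2t}\sim -\tfrac{1}{2t}\to -\infty$, while $\pi/\sin(2\pi t)\sim \tfrac{1}{2t}\to +\infty$. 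The two terms diverge and must cancel to leading order, with the residual being only $O(t)$ (in fact $\tfrac{\pi^2 t}{3}+O(t^2)$). A bound of the form ``$\geq\pi$'' on the positive part throws away precisely the divergent piece that is needed for the cancellation, so the resulting estimate is false for small $t$ (equivalently, for large $n$). Any bookkeeping in this problem must keep the $1/(2t)$ parts matched.

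The paper resolves exactly this issue: after expanding via the digamma series it reduces to showing
\[
\frac12 + \sum_{n\geq 1}\frac{2}{(nx+1)(nx+2)} - \frac{\pi}{x\sin(2\pi/x)} < 0, \quad x>2,
\]
bounds the series by $\frac{\pi^2}{3x^2}$ (which preserves the correct small-$1/x$ cancellation), and then proves the resulting elementary inequality $\bigl(\tfrac{1}{2s}+\tfrac{\pi^2 s}{3}\bigr)\sin(2\pi s)<\pi$ on $(0,\tfrac12)$ by a nontrivial auxiliary differential inequality. That is the genuinely hard step; your proposal identifies that a step of this kind is needed but does not supply it, and the specific route you outline ($\pi/\sin(2\pi t)\geq\pi$) would fail. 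To repair the argument along your lines you would need to keep the full $\pi/\sin(2\pi t) = \tfrac{1}{2t} + \tfrac{\pi^2 t}{3} + O(t^3)$ expansion (or a rigorous two-sided bound of this quality) and pair it term by term against the asymptotics of $\psi(t)-\psi(t+\tfrac12)$ before dropping to a uniform inequality.
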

\begin{theorem}
Let $\Om$ be a regular $n$-gon of unit volume centered at the origin with the conductivity $k=0,\infty$ ($n\geq3$). Then, the corresponding effective conductivity for $D=\rho \Om$ is 
\beq\label{eq:eff_cond_ngon}
\sigma^*
 = \left(1 \pm      \frac{1}{2\pi}\frac{\Gamma^4\left(\frac 1 n\right)}{\Gamma^2\left(\frac 2 n\right)}\frac{\tan\left(\frac \pi n\right)}{n}\, \rho ^2 
+\frac{1}{8\pi^2}\frac{\Gamma^8\left(\frac 1 n\right)}{\Gamma^4\left(\frac 2 n\right)} \frac{\tan^2 \left(\frac \pi n\right)}{ n^2} \, \rho ^4\right)I + O(\rho^6),
\eeq
where the symbols $+$ and $-$ correspond to $k=\infty$ and $k=0$, respectively. 
The asymptotic of $\sigma^*$ up to $\rho^4$-term converges to that corresponding to the unit disk as $n\rightarrow\infty$ in a strictly decreasing manner for $k=\infty$, and a strictly increasing manner for $k=0$ with sufficiently small $\rho$. 
\end{theorem}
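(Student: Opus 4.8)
The formula \eqnref{eq:eff_cond_ngon} is obtained by substituting $a_1 = 0$ (Lemma \ref{lemma:symmetric}) together with the expression \eqnref{gamma:n} for $\gamma$ into the asymptotic expansion \eqnref{eq:effectiveconductivity} of Theorem \ref{thm:effective}; indeed, when $a_1 = 0$ the matrix $A$ is diagonal with both diagonal entries equal to $\pm 2\pi\gamma^2\rho^2 + 2\pi^2\gamma^4\rho^4$, and plugging $\gamma^2 = \frac{1}{16\pi^2}\frac{\Gamma^4(1/n)}{\Gamma^2(2/n)}L^2$ and $L^2 = \frac{4\tan(\pi/n)}{n}$ gives exactly the scalar coefficients displayed. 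This is a routine computation, so the content of the theorem is really the monotonicity and convergence assertion.

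For the convergence and monotonicity, the first step is to note that by Lemma \ref{n_gon:gamma:deri} the sequence $\gamma(n)$ strictly decreases to $1/\sqrt{\pi}$ as $n \to \infty$. Writing the $\rho^2$-coefficient of $\sigma^*$ as $\pm 2\pi\gamma(n)^2$ and the $\rho^4$-coefficient as $2\pi^2\gamma(n)^4$, I would argue as follows. For $k = \infty$ (the $+$ sign), both $2\pi\gamma(n)^2$ and $2\pi^2\gamma(n)^4$ are strictly increasing functions of $\gamma(n)^2 > 0$, hence strictly decreasing in $n$; adding them and the constant $1$, the full expression $1 + 2\pi\gamma(n)^2\rho^2 + 2\pi^2\gamma(n)^4\rho^4$ is strictly decreasing in $n$ for every fixed $\rho > 0$, and converges to $1 + 2\pi(1/\sqrt{\pi})^2\rho^2 + 2\pi^2(1/\sqrt{\pi})^4\rho^4 = 1 + 2\rho^2 + 2\rho^4$, which is precisely the $\rho^4$-truncation of the Maxwell--Garnett value $\sigma^* = 1 + \rho^2\frac{2(k-1)}{k+1} + 2\rho^4\frac{(k-1)^2}{(k+1)^2} + o(\rho^4)$ at $k = \infty$ for the unit-area disk (with $\gamma = 1/\sqrt{\pi}$, $a_1 = 0$). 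For $k = 0$ (the $-$ sign), the expression is $1 - 2\pi\gamma(n)^2\rho^2 + 2\pi^2\gamma(n)^4\rho^4$; here the $\rho^2$-term decreases the value while the $\rho^4$-term increases it, so I would differentiate $g(t) := 1 - 2\pi t\rho^2 + 2\pi^2 t^2\rho^4$ in $t = \gamma(n)^2$ to get $g'(t) = -2\pi\rho^2 + 4\pi^2 t\rho^4 = 2\pi\rho^2(2\pi t\rho^2 - 1)$, which is negative once $\rho$ is small enough that $2\pi\gamma(3)^2\rho^2 < 1$ (and a fortiori for all larger $n$, since $\gamma(n)$ decreases); thus $g$ is strictly decreasing in $t$ on the relevant range, and since $t = \gamma(n)^2$ strictly decreases in $n$, the composite is strictly increasing in $n$, converging to $1 - 2\rho^2 + 2\rho^4$.

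The main obstacle I anticipate is the $k = 0$ case, where the $\rho^2$- and $\rho^4$-contributions push the value in opposite directions, so monotonicity in $n$ is not automatic and genuinely requires the smallness hypothesis on $\rho$; I would make the threshold explicit, e.g. $\rho^2 < \frac{1}{2\pi\gamma(3)^2}$, and note that this is exactly the content of the phrase ``for sufficiently small $\rho$'' in the statement. A minor secondary point is to confirm that the limiting expression does coincide with the disk's expansion: this follows because a disk of unit area has $\gamma = 1/\sqrt{\pi}$ and $a_1 = 0$, so Theorem \ref{thm:effective} gives $\sigma^* = (1 \pm 2\rho^2 + 2\rho^4)I + O(\rho^6)$, matching the limit of \eqnref{eq:eff_cond_ngon} term by term; one should also remark that the $O(\rho^6)$ error is uniform in $n$ because $\gamma(n) \le \gamma(3)$ and $|\Om| = 1$, so the error bound $O(\rho^6|\Om|^3)$ in \eqnref{eq:effectiveconductivity} is controlled uniformly, which is what makes the term-by-term passage to the limit legitimate.
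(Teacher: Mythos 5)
Your proof is correct and follows essentially the same route as the paper: substitute $a_1=0$ (rotational symmetry) and $\gamma(n)$ from \eqnref{gamma:n} into Theorem \ref{thm:effective}, then invoke Lemma \ref{n_gon:gamma:deri} for the monotonicity. The paper's own proof is exactly this but without details; your explicit treatment of the $k=0$ case, where the $\rho^2$ and $\rho^4$ terms compete and the threshold $\rho^2 < 1/(2\pi\gamma(3)^2)$ is needed, is a useful clarification that the paper leaves implicit in the phrase ``sufficiently small $\rho$.''
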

\begin{proof}
From Theorem \ref{thm:effective} and \eqnref{gamma:n},  we observe \eqnref{eq:eff_cond_ngon}. Monotonicity properties follow from Lemma \ref{n_gon:gamma:deri}. 
\end{proof}
For example, for the cases $n=3$ and $n=4$ we respectively have
\begin{align*}
\sigma^* &= \Big(1 \pm  \frac{3 \sqrt 3\, \Gamma^6(1/3)} {8 \pi^{3}} \rho^2 + \
\frac{27\, \Gamma^{12}(1/ 3)}{128 \pi^6} \rho^4\Big) I +O(\rho^6),\\
\sigma^*&=\Big(1 \pm   \frac{\Gamma^4(1/4)}{8 \pi^2}\rho^2 +  \frac{\Gamma^8(1/4)}{128 \pi^4} \rho^4\Big)I + O(\rho^6).
\end{align*}

\noindent{\textbf{Proof of Lemma \ref{n_gon:gamma:deri}.} 
The digamma function admits the series expansion
$	\frac{\Gamma'(z)}{ \Gamma(z)}=-C-\frac 1 z + \sum_{n = 1}^\infty \left(\frac 1 n - \frac 1 {n+z}\right)$, $z \neq 0, -1,-2, \dots,$
with the Euler--Mascheroni constant $C$  (see \cite[formulas 6.3.1, 5, and 16]{Abramowitz:1964:HMF:book}). 
In particular, it holds for $x>2$ that
\begin{align}\notag
\gamma'(x)
	&= \frac{1}{2\pi} 
	 \frac {\Gamma^2(\frac 1 x)} {\Gamma(\frac 2 x) }\frac{\sqrt{\tan (\frac \pi x) }}{ x^{\frac 3 2}}\, \left( \frac 1 2 + \sum_{n=1}^\infty \frac{2}{(nx+1)(nx+2)}  - \frac \pi {x \sin ( \frac{2\pi} {x} )} \right),
	 \end{align}
where $\gamma(x)$ is the function given by \eqnref{gamma:n} with $n$ replaced by a real variable $x$. 
 We claim 
	 \beq\label{gamma:ineq}
	 \gamma'(x)<0\quad\mbox{for }x>2.
	  \eeq	 
Because of $ \sum_{n=1}^\infty \frac{2}{(nx+1)(nx+2)}\leq \sum_{n=1}^\infty \frac{2}{n^2 x^2}=\frac{\pi^2}{3}\frac{1}{x^2}$, we only need to show $\frac 1 2 + \frac {\pi^2} {3x^2} - \frac{\pi}{x \sin(\frac {2\pi} x)} <0$.
Note that
\begin{align*}
&f(s):=(\pi^2 s^2 +1) \sin(2\pi s) - 2\pi s \cos(2 \pi s) >0,\\
&\frac{d}{ds}\left[{6s^2}\,\frac{d}{ds}\left(\Big(\frac{1}{2s}+\frac{\pi^2 s}{3}\Big)\sin (2\pi s)\right)\right]= - 8\pi^2 s f(s)<0 \quad\mbox{for }s\in(0,\frac{1}{2}).
\end{align*}
Using this, one can easily find that $(\frac 1 {2s} + \frac {\pi^2 s}{ 3})  \sin(2\pi s) < \pi  $ for  $s\in (0, \frac 1 2)$, which is equivalent to $\frac 1 2 + \frac {\pi^2} {3x^2} - \frac{\pi}{x \sin(\frac {2\pi} x)} <0$ for $x>2$. This proves \eqnref{gamma:ineq}. 
Hence, we conclude that $\gamma(n)$ is strictly decreasing w.r.t. $n\geq 3$. Furthermore, as $\Gamma(s)=\frac{1}{s}+\mbox{h.o.t.}$ near $s=0$, $\gamma(n)$ converges to ${\frac{1}{\sqrt{\pi}}}$ as $n\rightarrow\infty$.
\qed

\section{Inequality relations between the coefficients of the Riemann mapping}\label{sec:Riemann}
The following relations on the polarization tensor are well-known:
\begin{lemma}[\cite{Ammari:2007:PMT:book}] \label{lemma:tensor_bounds}
	Suppose that $B$ is a simply connected bounded Lipschitz domain and $0 \leq k\neq 1\leq \infty$. Let $\lambda=\frac{k + 1}{2(k - 1 )}$. Assume that $I$ is a finite set of multi-indices and $\{a_\alpha\} _{\alpha \in I}$ are constants such that the function $H(x) = \sum_{\alpha \in I}  a_\alpha x^\alpha$ is harmonic. Then, the following inequality holds:
	\beq\notag
	\left(1-\frac{1}{k}\right) \int_B | \nabla H|^2 \leq \sum _{i, j \in I}a_i a_j M_{ij}(B,\lambda) \leq (k-1) \int_B | \nabla H|^2 .
	\eeq
\end{lemma}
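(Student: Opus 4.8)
The plan is to write the quadratic form $\sum_{i,j\in I}a_i a_j M_{ij}(B,\lambda)$ as a Dirichlet energy attached to the transmission problem \eqnref{cond_eqn0} and then to sandwich it between the primal and the dual (complementary) variational estimates. Set $H=\sum_{\alpha\in I}a_\alpha x^\alpha$ and $\varphi=(\lambda I-\Kcal^*_{\p B})^{-1}[\pd{H}{\nu}]$; this is well defined because $\pd{H}{\nu}$ has mean zero on $\p B$ (as $\Delta H=0$), and by linearity $\sum_{\alpha\in I}a_\alpha(\lambda I-\Kcal^*_{\p B})^{-1}[\pd{x^\alpha}{\nu}]=\varphi$, so the definition \eqnref{def:PT} gives $\sum_{i,j}a_ia_j M_{ij}=\int_{\p B}H\varphi\,d\sigma$. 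For $u=H+\Scal_{\p B}[\varphi]$, the jump relation for the single-layer potential together with the flux condition $k\,\p_\nu u|_-=\p_\nu u|_+$ force $\varphi=\p_\nu u|_+-\p_\nu u|_-=(k-1)\p_\nu u|_-$, and $u$ solves \eqnref{cond_eqn0}; since $u$ and $H$ are harmonic in $B$, Green's identity then yields
\[
\sum_{i,j\in I}a_ia_j M_{ij}(B,\lambda)=(k-1)\int_B\nabla H\cdot\nabla u\,dx .
\]

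Next I would pass to a large disk $B_R\supset\overline B$ and let $u_R\in H^1(B_R)$ solve $\nabla\cdot\sigma\nabla u_R=0$ in $B_R$ with $u_R=H$ on $\p B_R$, where $\sigma=1+(k-1)\chi(B)$; by well-posedness of the transmission problem $u_R\to u$ in $H^1(B)$ as $R\to\infty$. Because $H$ is harmonic and $u_R=H$ on $\p B_R$ one has the exact identity $\int_{B_R}\nabla u_R\cdot\nabla H=\int_{B_R}|\nabla H|^2$, and because $\nabla\cdot(\sigma\nabla u_R)=0$ one has $\int_{B_R}\sigma|\nabla u_R|^2=\int_{B_R}\sigma\nabla u_R\cdot\nabla H$; splitting $\sigma$ over $B$ and $B_R\setminus B$ and combining, these give
\[
\int_{B_R}\sigma|\nabla u_R|^2=\int_{B_R}|\nabla H|^2+(k-1)\int_B\nabla u_R\cdot\nabla H .
\]

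The upper bound comes from the primal principle: $u_R$ minimizes $v\mapsto\int_{B_R}\sigma|\nabla v|^2$ over $v\in H^1(B_R)$ with $v=H$ on $\p B_R$, so testing against $v=H$ gives $\int_{B_R}\sigma|\nabla u_R|^2\le\int_{B_R}\sigma|\nabla H|^2=\int_{B_R}|\nabla H|^2+(k-1)\int_B|\nabla H|^2$; comparing with the previous display and letting $R\to\infty$ gives $(k-1)\int_B\nabla H\cdot\nabla u\le(k-1)\int_B|\nabla H|^2$, i.e. the right-hand inequality (no division by $k-1$ is performed, so it holds for $k>1$ and $k<1$ alike). The lower bound comes from the dual principle in the form of the weighted Cauchy--Schwarz inequality $\bigl(\int_{B_R}\nabla u_R\cdot\nabla H\bigr)^2\le\int_{B_R}\sigma|\nabla u_R|^2\cdot\int_{B_R}\sigma^{-1}|\nabla H|^2$, i.e. $\int_{B_R}\sigma|\nabla u_R|^2\ge\bigl(\int_{B_R}|\nabla H|^2\bigr)^2/\int_{B_R}\sigma^{-1}|\nabla H|^2$. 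Subtracting $\int_{B_R}|\nabla H|^2$, using $\int_{B_R}|\nabla H|^2-\int_{B_R}\sigma^{-1}|\nabla H|^2=(1-\tfrac1k)\int_B|\nabla H|^2$ and the displayed identity, one obtains
\[
(k-1)\int_B\nabla u_R\cdot\nabla H\ \ge\ \frac{\int_{B_R}|\nabla H|^2}{\int_{B_R}\sigma^{-1}|\nabla H|^2}\,\Bigl(1-\frac1k\Bigr)\int_B|\nabla H|^2 ;
\]
when $H$ is non-constant $\int_{B_R}|\nabla H|^2\to\infty$ while numerator and denominator differ by the fixed quantity $(1-\tfrac1k)\int_B|\nabla H|^2$, so the ratio tends to $1$ and letting $R\to\infty$ gives $\sum_{i,j}a_ia_j M_{ij}\ge(1-\tfrac1k)\int_B|\nabla H|^2$ (the case $\nabla H\equiv0$ being trivial). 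The extreme values $k=0,\infty$, for which one of the two bounds degenerates to $\pm\infty$, are recovered by letting $k\to0,\infty$ or by repeating the argument for the perfect-insulator / perfect-conductor version of the problem.

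I expect the main obstacle to be the passage to the limit $R\to\infty$: the whole-space Dirichlet integrals $\int_{\RR^2}\sigma|\nabla u|^2$ and $\int_{\RR^2}|\nabla H|^2$ both diverge once $H$ is non-constant, so the estimates must be organized on the truncated domains $B_R$, with the convergence $u_R\to u$ and the behaviour of the ratio of the two divergent integrals handled quantitatively (the defect $(H-u)|_{\p B_R}=O(R^{-1})$ is what makes this go through). The other delicate point — already present in Theorem \ref{theorem:DJM1} — is the identification of the polarization-tensor quadratic form with $(k-1)\int_B\nabla H\cdot\nabla u$ on a merely Lipschitz domain, which rests on the $L^2$ jump relations for the layer potentials and the solvability of $\lambda I-\Kcal^*_{\p B}$.
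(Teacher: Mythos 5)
The paper states Lemma \ref{lemma:tensor_bounds} as a cited result from \cite{Ammari:2007:PMT:book} and does not reproduce a proof, so there is no internal argument to compare against; your proposal stands or falls on its own.

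Your proposal is correct. The reduction $\sum_{\alpha,\beta} a_\alpha a_\beta M_{\alpha\beta}=\int_{\p B}H\varphi\,d\sigma=(k-1)\int_B\nabla H\cdot\nabla u$ via the jump relation $\varphi=(k-1)\p_\nu u|_-$ and Green's identity is exactly right, and the identity $\int_{B_R}\sigma|\nabla u_R|^2=\int_{B_R}|\nabla H|^2+(k-1)\int_B\nabla u_R\cdot\nabla H$ follows cleanly from $\int_{B_R}\sigma\nabla u_R\cdot\nabla(u_R-H)=0$ and $\int_{B_R}\nabla H\cdot\nabla(u_R-H)=0$. Your sign bookkeeping (never dividing by $k-1$) is careful and both halves of the inequality indeed hold for $k>1$ and $k<1$ alike, and you correctly note that the ratio $E_R/(E_R-e)\to1$ is what rescues the lower bound in the limit. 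The one place you lean on a claim you do not fully establish is the convergence $u_R\to u$ in $H^1(B)$; this does hold, via $(u-H)|_{\p B_R}=O(R^{-1})$, the maximum principle for $\nabla\cdot\sigma\nabla$, and a Caccioppoli estimate on a neighbourhood of $B$ — you identify the mechanism but should be aware it is not merely ``well-posedness.''

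That said, your truncation-to-$B_R$ route is heavier than it needs to be. The standard route in \cite{Ammari:2007:PMT:book} works with $w=u-H$ on all of $\RR^2$: since $w$ decays, $\int_{\RR^2}\sigma|\nabla w|^2$ is finite, and testing the weak form against $w$ gives $\int_{\RR^2}\sigma|\nabla w|^2=-(k-1)\int_B\nabla H\cdot\nabla w$, hence
\begin{equation*}
\sum a_\alpha a_\beta M_{\alpha\beta}=(k-1)\int_B|\nabla H|^2+(k-1)\int_B\nabla H\cdot\nabla w=(k-1)\int_B|\nabla H|^2-\int_{\RR^2}\sigma|\nabla w|^2.
\end{equation*}
The upper bound is then immediate from $\int_{\RR^2}\sigma|\nabla w|^2\ge0$, with no truncation, no limit, and no divergent $E_R$. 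For the lower bound one applies Cauchy--Schwarz \emph{on $B$ alone}: from $\int_{\RR^2}\sigma|\nabla w|^2=|k-1|\,|\int_B\nabla H\cdot\nabla w|\le|k-1|\big(\int_B|\nabla H|^2\big)^{1/2}\big(\tfrac1k\int_{\RR^2}\sigma|\nabla w|^2\big)^{1/2}$ one gets $\int_{\RR^2}\sigma|\nabla w|^2\le\frac{(k-1)^2}{k}\int_B|\nabla H|^2$, which substituted back yields exactly $(1-\tfrac1k)\int_B|\nabla H|^2$. This buys you a proof with only finite integrals, no exhaustion by disks, and no need to control the boundary defect of $u_R$; what your approach buys in exchange is a self-contained ``finite-domain'' picture that avoids invoking the decaying-energy space for $w$, at the cost of the $R\to\infty$ bookkeeping you flagged yourself.
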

\begin{lemma}[\cite{Ammari:2005:PTT}] \label{lemma:monotonicity}
		Assume the same conditions for $k$, $\lambda$, $I$, and $\{a_\alpha\} _{\alpha \in I}$ as in Lemma \ref{lemma:tensor_bounds}. Suppose that $B$ and $B'$ are two simply connected bounded Lipschitz domains satisfying $B' \subseteq B$. Then, the following inequalities hold:
\begin{equation}
 	\begin{aligned}
 	\sum_{i, j \in I} a_i a_j M_{ij} (B,\lambda) \geq  \sum_{i, j \in I} a_i a_j M_{ij} (B',\lambda) & \text{ if $k>1$}, \\
 	\sum_{i, j \in I} a_i a_j M_{ij} (B,\lambda) \leq  \sum_{i, j \in I} a_i a_j M_{ij} (B',\lambda) & \text{ if $k<1$}.
 \end{aligned}
 \end{equation}
\end{lemma}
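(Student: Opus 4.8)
The plan is to deduce the monotonicity from a variational (energy) characterization of the polarization–tensor quadratic form in which the inclusion enters monotonically. Throughout fix $k$, $\lambda=\frac{k+1}{2(k-1)}$, the index set $I$, and the constants $\{a_\alpha\}_{\alpha\in I}$ so that $H(x)=\sum_{\alpha\in I}a_\alpha x^\alpha$ is harmonic, and write $Q(B):=\sum_{i,j\in I}a_ia_jM_{ij}(B,\lambda)$.

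First I would record the standard energy identity (see \cite{Ammari:2007:PMT:book})
\[
Q(B)=(k-1)\int_B\nabla u\cdot\nabla H\,dx,
\]
where $u=H+\mathcal{S}_{\partial B}\big[(\lambda I-\mathcal{K}^*_{\partial B})^{-1}[\partial_\nu H]\big]$ solves the transmission problem \eqnref{cond_eqn0} with data $H$; this follows from Green's identity in $B$ (using $\Delta u=0$ there) together with the transmission condition, which gives $\partial_\nu u|_{\partial B}^{-}=\frac{1}{k-1}(\lambda I-\mathcal{K}^*_{\partial B})^{-1}[\partial_\nu H]$. Writing $u=H+v$ with $v$ decaying, the field $v$ is the minimizer of the strictly convex functional $\phi\mapsto\int_{\RR^d}(1+(k-1)\chi(B))|\nabla\phi|^2+2(k-1)\int_B\nabla H\cdot\nabla\phi$ over finite Dirichlet energy fields, the functional being well defined modulo constants since $1+(k-1)\chi(B)>0$ for $0<k<\infty$. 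Substituting the Euler–Lagrange relation to eliminate the cross term and completing the square on $B$ then yields the variational principle
\[
Q(B)=\min_{\phi}\Big\{\int_{\RR^d}|\nabla\phi|^2\,dx+(k-1)\int_B|\nabla(\phi+H)|^2\,dx\Big\}.
\]
An equivalent rearrangement, $Q(B)=\tfrac{k-1}{k}\int_B|\nabla H|^2+\min_\phi\big\{\int_{\RR^d\setminus B}|\nabla\phi|^2+k\int_B|\nabla\phi+\tfrac{k-1}{k}\nabla H|^2\big\}$, makes coercivity (coefficient $k>0$) and attainment transparent and already reproves Lemma \ref{lemma:tensor_bounds}; I would check the normalizing constant against the explicit disk polarization tensor.

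Granting the variational principle, the monotonicity is a one–line argument. Set $\mathcal{E}_B(\phi):=\int_{\RR^d}|\nabla\phi|^2+(k-1)\int_B|\nabla(\phi+H)|^2$. If $B'\subseteq B$, then for every admissible $\phi$
\[
\mathcal{E}_B(\phi)-\mathcal{E}_{B'}(\phi)=(k-1)\int_{B\setminus B'}|\nabla(\phi+H)|^2\,dx,
\]
which is $\ge 0$ when $k>1$ and $\le 0$ when $k<1$. Taking the infimum over $\phi$ on both sides gives $Q(B)\ge Q(B')$ for $k>1$ and $Q(B)\le Q(B')$ for $k<1$, which is precisely the claimed pair of inequalities.

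The substantive work is the variational principle, not the monotonicity step: one must fix the correct homogeneous Sobolev space and the $O(|x|^{1-d})$ normalization (in two dimensions finite Dirichlet energy does not force decay, so one argues modulo additive constants), justify the integration by parts at infinity (legitimate because $\nabla\cdot\big((1+(k-1)\chi(B))\nabla v\big)=-(k-1)\,\partial_\nu H\,\delta_{\partial B}$ has compact support), and confirm coercivity and attainment. One should also observe that the derivation never uses $\deg H=1$, so it covers the generalized polarization tensors as the lemma requires, and that the extreme conductivities $k=\infty$ (where the principle degenerates to $\min\{\int_{\RR^d}|\nabla\phi|^2:\nabla(\phi+H)=0\text{ on }B\}$) and $k=0$ are handled by a limiting argument, using continuity of $M(B,\lambda)$ in $\lambda$ via the spectral resolution of $\mathcal{K}^*_{\partial B}$ as in the proof of Theorem \ref{cor:obs}.
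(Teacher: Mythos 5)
The paper does not prove Lemma \ref{lemma:monotonicity}; it simply quotes the result from \cite{Ammari:2005:PTT}, so there is no in-text proof to compare against. Your variational argument is nonetheless the standard route to this monotonicity, and it is correct. The key identity
$$Q(B)=\min_{\phi}\Big\{\int_{\RR^d}|\nabla\phi|^2\,dx+(k-1)\int_B|\nabla(\phi+H)|^2\,dx\Big\}$$
checks out: with $u=H+v$ solving \eqref{cond_eqn0}, one has $\partial_\nu u|^-=(\lambda-\tfrac12)\varphi=\tfrac{1}{k-1}(\lambda I-\KstarOmega)^{-1}[\partial_\nu H]$ from the jump relations, so $Q(B)=(k-1)\int_{\partial B}H\,\partial_\nu u|^-\,d\sigma=(k-1)\int_B\nabla u\cdot\nabla H\,dx$ by Green's identity; expanding $\mathcal E_B(\phi)=J(\phi)+(k-1)\int_B|\nabla H|^2$ with $J(\phi)=\int(1+(k-1)\chi(B))|\nabla\phi|^2+2(k-1)\int_B\nabla H\cdot\nabla\phi$, the Euler--Lagrange identity $\int(1+(k-1)\chi(B))|\nabla v|^2=-(k-1)\int_B\nabla H\cdot\nabla v$ gives $J(v)=(k-1)\int_B\nabla H\cdot\nabla v$ and hence $\min_\phi\mathcal E_B=Q(B)$. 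The completed-square form you give exhibits coercivity for all $0<k<\infty$ (the bad sign of $k-1$ is absorbed by the strictly positive coefficient $k$), which also justifies attainment and reproves Lemma \ref{lemma:tensor_bounds}. The monotonicity step, comparing $\mathcal E_B$ and $\mathcal E_{B'}$ at a common test field and taking infima, is exactly right, and the sign of $k-1$ cleanly controls which direction the inequality goes. Your remarks on the $d=2$ normalization (working modulo constants in $\dot H^1/\RR$, with the mean-zero density removing the logarithmic tail) and on passing to $k=0,\infty$ by continuity in $\lambda$ close the remaining technical gaps. I verified the formula on the unit disk with $H=x_1$, recovering $Q(B)=\tfrac{2\pi(k-1)}{k+1}$ from both sides. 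In short: correct, and it is the expected proof of the result the paper cites without reproducing.
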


As Theorem \ref{theorem:DJM1} relates the polarization tensor for a domain with its exterior conformal mapping,  an estimate of the polarization tensor $M$ can be translated into an estimate of the coefficients of the Riemann mappings via this relation. In particular, we can obtain inequality relations of the coefficients of the interior conformal mapping by using Lemmas \ref{lemma:tensor_bounds} and \ref{lemma:monotonicity}. 

In view of the fact that Theorem \ref{theorem:DJM1} relates the PT to the exterior map coefficients, it may be more natural to state the inequality relations directly in terms of exterior conformal maps. However, in univalent function theory, it is more common to consider functions defined on the unit disk $\mathbb D$ so that, in the rest of this section, we derive inequality relations of the coefficients of the (interior) Riemann mappings.

Consider a Jordan domain (domain enclosed by a Jordan curve), namely $\widetilde{\Om}$, containing the origin. By the Riemann mapping theorem, there exists a unique (interior) conformal map $\Phi[\widetilde{\Om}]$ from the unit disk onto $\Omega$ with $\Phi(0)=0$ and $\Phi'(0)>0$, which admits the Taylor series expansion
\begin{equation} \label{eq:taylor}
 \Phi[\widetilde{\Om}](z) = b_1 z + b_2 z^2 + \cdots
\end{equation}
for some complex coefficients $b_n$, $n\geq2$, and $b_1>0$. The reflection of the exterior of $\widetilde{\Om}$ with respect to the unit circle centered at the origin defines a Jordan domain 
$$\Om=\left\{\frac{1}{z}:z\in\CC\setminus\overline{\widetilde{\Om}}\right\}\cup\{0\}.$$
The exterior conformal mapping $\Psi$ associated with $\Om$ satisfies $\gamma=\frac{1}{b_1}$ and is expanded as the following Laurent series
\begin{equation}
\Psi[\Om] (z) =\frac{1}{\Phi[\widetilde{\Om}](\frac{1}{b_1 z})}= z -\frac{b_2 } {b_1 ^2}+ \frac 1 {{b_1 ^3} } \Big(\frac {b_2 ^2} {b_1} - b_3\Big)  \frac 1 {z}  + \sum_{n \geq 2} \frac {a_n}{z^n}
\end{equation}
for some complex coefficients $a_n$, $n\geq 2$. 
From Theorem \ref{theorem:DJM1}, we have
\begin{equation} \label{eq:translation}
M(\Omega, \pm\frac{1}{2}) = 2\pi 
\begin{bmatrix}
\ds\pm \frac{1}{b_1 ^2} + \frac{1}{b_1 ^3}\, \text{Re} \Big(\frac{b_2 ^2 } {b_1} - b_3\Big) & \ds\frac{1} {b_1 ^3} \,\text{Im} \Big(\frac{b_2 ^2 } {b_1} - b_3\Big) \\
\ds \frac{1} {b_1 ^3} \,\text{Im} \Big(\frac{b_2 ^2 } {b_1} - b_3\Big)& \ds\pm \frac{1}{b_1 ^2} - \frac{1}{b_1 ^3} \, \text{Re}\Big(\frac{b_2 ^2 } {b_1} - b_3\Big)
\end{bmatrix}.
\end{equation}

Now, we derive the following theorem by translating Lemmas \ref{lemma:tensor_bounds} and \ref{lemma:monotonicity} into the coefficients of the interior Riemann mapping of a Jordan domain $\widetilde{\Om}$.
\begin{theorem} \label{thm:translation}
Suppose that $\widetilde{\Om}$ is a domain enclosed by a smooth Jordan curve, containing $0$. We denote its interior conformal mapping by 
$\Phi[\widetilde \Om] (z) = b_1 z + b_2 z^2 + \cdots.$
Then, it holds that
$$\frac{1}{b_1 ^2} \pm \frac{1}{b_1 ^3} \operatorname{Re} \Big(\frac{b_2 ^2 } {b_1} - b_3\Big) \geq \frac {| \Om|} { 2\pi}.$$
 In particular, if $|\Om|=1$, then we have $\frac{1}{b_1 ^2} \pm \frac{1}{b_1 ^3} \operatorname{Re} \big(\frac{b_2 ^2 } {b_1} - b_3\big) \geq \frac{1}{2\pi}$.

Further, suppose that $\widetilde V$ is another domain enclosed by a smooth Jordan curve satisfying $\widetilde \Om \subseteq \widetilde V$ (so that $ V \subseteq   \Om$). We denote its interior conformal mapping by $\Phi[\widetilde V] (z) = c_1 z + c_2 z^2 + \cdots$.
 Then, the following monotonicity inequality holds:
$$ \frac{1}{b_1 ^2} \pm \frac{1}{b_1 ^3}\, \operatorname{Re} \Big(\frac{b_2 ^2 } {b_1} - b_3\Big) \geq\frac{1}{c_1 ^2} \pm \frac{1}{c_1 ^3}\, \operatorname{Re} \Big(\frac{c_2 ^2 } {c_1} - c_3\Big).$$

\end{theorem}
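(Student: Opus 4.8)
The plan is to obtain both inequalities by applying the classical polarization–tensor bounds of Lemmas~\ref{lemma:tensor_bounds} and \ref{lemma:monotonicity} to the \emph{reflected} domain $\Om$ (and $V$) with the harmonic polynomial $H(x)=x_1$, and then to translate the resulting estimate for the $(1,1)$-entry of the PT into a statement about the interior coefficients $b_n$ by means of the identity \eqnref{eq:translation}.

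First I would record the elementary geometry of the reflection $z\mapsto 1/z$. Since $0\in\widetilde\Om$ and the smooth Jordan curve $\p\widetilde\Om$ lies at positive distance from $0$, inversion carries $\CC\setminus\overline{\widetilde\Om}$ (a neighbourhood of $\infty$ on the Riemann sphere) onto a bounded domain $\Om\ni 0$ whose boundary is again a smooth Jordan curve; in particular $\Om$ is a simply connected bounded Lipschitz domain, and likewise $V$ is when $\widetilde V$ is given. Moreover $\widetilde\Om\subseteq\widetilde V$ is equivalent to $\CC\setminus\overline{\widetilde V}\subseteq\CC\setminus\overline{\widetilde\Om}$, which after inversion reads $V\subseteq\Om$. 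Thus the hypotheses of Lemmas~\ref{lemma:tensor_bounds} and \ref{lemma:monotonicity} hold for $\Om$ and $V$, and Theorem~\ref{theorem:DJM1} together with \eqnref{eq:translation} applies with $k=0,\infty$.

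For the first inequality I would take in Lemma~\ref{lemma:tensor_bounds} the single multi-index $(1,0)$ with coefficient $1$, so that $H(x)=x_1$, $\sum_{i,j\in I}a_ia_jM_{ij}(\Om,\lambda)=M_{11}(\Om,\lambda)$, and $\int_\Om|\nabla H|^2=|\Om|$. For $k=\infty$ ($\lambda=\tfrac12$) the left-hand bound of Lemma~\ref{lemma:tensor_bounds} reads $|\Om|\le M_{11}(\Om,\tfrac12)$, while for $k=0$ ($\lambda=-\tfrac12$) the right-hand bound reads $M_{11}(\Om,-\tfrac12)\le -|\Om|$ (at each endpoint the other bound degenerates to a vacuous statement). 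Inserting $M_{11}(\Om,\pm\tfrac12)=2\pi\big(\pm\tfrac1{b_1^2}+\tfrac1{b_1^3}\RRe(\tfrac{b_2^2}{b_1}-b_3)\big)$ from \eqnref{eq:translation} and rearranging, both cases collapse to $\tfrac1{b_1^2}\pm\tfrac1{b_1^3}\RRe(\tfrac{b_2^2}{b_1}-b_3)\ge\tfrac{|\Om|}{2\pi}$, the sign $+$ coming from $k=\infty$ and $-$ from $k=0$; the normalised statement for $|\Om|=1$ is then immediate. (Repeating the computation with $H(x)=x_2$, i.e.\ with $M_{22}$, yields the same bound with the opposite sign in front of the $\RRe$-term, so in fact both signs are valid for each of the two extreme conductivities.)

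For the monotonicity claim I would feed the same choice into Lemma~\ref{lemma:monotonicity} with $B=\Om$, $B'=V$ and $H(x)=x_1$: for $k>1$ this gives $M_{11}(\Om,\lambda)\ge M_{11}(V,\lambda)$, and for $k<1$ it gives $M_{11}(\Om,\lambda)\le M_{11}(V,\lambda)$. Taking $k=\infty$ in the former and $k=0$ in the latter, and substituting \eqnref{eq:translation} for $\Om$ (coefficients $b_n$) and for $V$ (coefficients $c_n$), both specialisations reduce to
$$
\frac1{b_1^2}\pm\frac1{b_1^3}\,\RRe\!\Big(\frac{b_2^2}{b_1}-b_3\Big)\ \ge\ \frac1{c_1^2}\pm\frac1{c_1^3}\,\RRe\!\Big(\frac{c_2^2}{c_1}-c_3\Big),
$$
again with $+$ corresponding to $k=\infty$ and $-$ to $k=0$. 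I do not anticipate a substantive obstacle: the analytic content is entirely carried by Theorem~\ref{theorem:DJM1} and the known PT inequalities, and the only points requiring care are checking that inversion sends smooth bounded Jordan domains containing $0$ to smooth bounded Jordan domains containing $0$ while reversing inclusions, and that $k\in\{0,\infty\}$ are admissible endpoints of Lemmas~\ref{lemma:tensor_bounds}–\ref{lemma:monotonicity} (where one of the two sandwiching bounds becomes vacuous but the one we use stays finite and sharp).
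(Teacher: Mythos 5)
Your argument is correct and follows essentially the same route as the paper: both apply Lemma~\ref{lemma:tensor_bounds} and Lemma~\ref{lemma:monotonicity} at the extreme conductivities $k=0,\infty$, then read off the resulting bounds through the explicit PT formula \eqnref{eq:translation}. The only cosmetic difference is that you take $H(x)=x_1$ (resp.\ $x_2$) directly to get scalar inequalities for $M_{11}$ (resp.\ $M_{22}$), whereas the paper runs the lemmas with $H(x)=c_1 x_1+c_2 x_2$ for arbitrary $c_1,c_2$ so as to state the intermediate conclusions as matrix (positive-semidefinite) inequalities before extracting the diagonal entries; the content is identical, and your observation that either single value of $k$ already yields both $\pm$ signs via $M_{11}$ and $M_{22}$ is a correct refinement.
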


\begin{proof}
{ From Lemmas \ref{lemma:tensor_bounds} and \ref{lemma:monotonicity} with $k=0,\infty$ and $H(x)=c_1 x_2+c_2 x_2$ for arbitrary constants $c_1,c_2$, we have
		\begin{equation} \label{eq:matrixinequalities}
	\begin{aligned}
	|\Omega|I_{2\times 2} &\leq M(\Om,+\frac{1}{2}),\\
	-| \Omega|I_{2 \times 2 } &\geq M(\Om,-\frac{1}{2}),\\
    M(V,+\frac{1}{2}) &\leq M(\Om,+\frac{1}{2}),\\
    M(V,-\frac{1}{2}) &\geq M(\Om,-\frac{1}{2}),
	\end{aligned}
	\end{equation}	
where we write $A \leq B$ for real symmetric matrices $A$ and $B$ such that $B-A$ is positive semi-definite.}
Combined with Theorem \ref{theorem:DJM1}, the first or the second inequality in \eqref{eq:matrixinequalities} proves the first assertion, and
the third or the fourth inequality in \eqref{eq:matrixinequalities} proves the next assertion.
\end{proof}

Recall that $\widetilde \Om = \Phi[\widetilde \Om](\mathbb D)$ and $\Phi[\widetilde \Om](z) = b_1 z +\mbox{higher order terms}$ ($b_1>0$). Therefore, it is quite reasonable to expect that large $b_1$ implies large $|\widetilde \Om|$ (so that small $|\Om|$). 
We may further establish more properties on the conformal mapping coefficients by exploiting our vast knowledge of GPT, e.g. we may apply Lemma \ref{lemma:monotonicity} with higher order $H(x)$. 

It is worth remarking that inequalities between coefficients of the Riemann mappings are of interest in univalent function theory \cite{Duren:1983:UF, Kiryatskii:1990:SFC, Lewin:1971:BFC, Pommerenke:1975:UF:book,Thomas:2018:UF:book}.

\begin{remark}
{
	Theorem \ref{thm:translation} can be extended to arbitrary Jordan domains (no smoothness assumptions on the boundary curves) $\widetilde \Om$ and $\widetilde V$ with $0 \in \widetilde \Om \subset \subset \widetilde V$. This is because given a Jordan domain $0 \in \widetilde U$, we can approximate $\widetilde U$ by domains enclosed by $\Phi[\widetilde U](\{z:|z|=r\})$ ($0<r<1$).}
\end{remark}

\section{Spectral monotonicity of the NP operator with respect to the shape deformation}\label{sec:NP:mono}
 
In this section, we numerically compute eigenvalues of the NP operator for various Fourier modes in the shape of the inclusion. 
We find that the eigenvalues have a monotonic behavior as the shape of the inclusion linearly deviates from a disk. 
 
 \subsection{Finite section method}

 For a $C^{1,\alpha}$ domain $\Om$, the NP operator on $H_0^{-1/2}(\p\Om)$ admits only a sequence of eigenvalues that accumulates to zero as its spectrum. In two dimensions, the spectrum is symmetric with respect to zero \cite{Blumenfeld:1914:UPF}. The positive eigenvalues are enumerated in the following way: 
 	\begin{equation}\label{eqn:eigenvalueorder}
		0.5>\left|\lambda_1\right|\geq\left|\lambda_2\right|\geq\left|\lambda_3\right|\geq\left|\lambda_4\right|\geq\cdots.
\end{equation}
The $l^2(\CC)$ space generated by the basis $\{\zeta_{\pm m}\}_{m\in\NN}$ corresponds the Sobolev space $H^{-1/2}_0(\p\Om)$ \cite{Jung:2018:SSM}; see also \cite{Kang:2018:SPS,Krein:1998:CLO} for the permanence of the spectrum for the NP operator with different norms.
For each $n\in\NN$, we set $H_n=\mbox{span}\big\{\zeta_{-n},\zeta_{-n+1},\cdots,\zeta_{-1},\zeta_1,\cdots,\zeta_{n-1},\zeta_n\big\}$ and define $P_n$ as the orthogonal projection to $H_n$. We then denote by $\left[\Kcal^*_{\p\Om}\right]_n$ the $n$-th section of $\left[\KstarOmega\right]$, that is the $2n\times2n$ matrix $$\left[{\KstarOmega}\right]_n=P_n\left[\KstarOmega\right]P_n=P_n \KstarOmega P_n.$$
For example, we have
$$ \left[\Kcal^*_{\p\Om}\right]_1= \begin{bmatrix}
0 & \mu_{1,1} \\
\overline{\mu_{1,1}} & 0 
\end{bmatrix}.
$$
For a bounded self-adjoint operator on a separable complex (infinite-dimensional) Hilbert space, the spectrum outside the convex hull of the essential spectrum is a set of eigenvalues with finite algebraic multiplicity, and can be approximated by eigenvalues of the finite section matrices \cite[Theorem 3.1]{Bottcher:2001:AAN}. Combining this result with the fact that $\Kcal^*_{\p\Om}$ is self-adjoint on $l^2(\CC)$ generated by $\{\zeta_{\pm m}\}$, we conclude that eigenvalues of the $n$-th section of $[{\KstarOmega}]_n$ converge to the spectrum of $\Kcal^*_{\p\Om}$; see also \cite[section 6.2]{Jung:2018:SSM}. 
Since $[{\KstarOmega}]_n$ is a self-adjoint finite-dimensional matrix, one can easily compute the eigenvalues.

In the following examples, we show positive eigenvalues $\lambda_k$, $1\leq k\leq 30$, of various smooth domains prescribed by their exterior conformal mapping, obtained by evaluating eigenvalues of $\left[\Kcal^*_{\p\Om}\right]_n$ with large $n$. The eigenvalue calculations were performed by MATLAB R2018a by use of the finite section method explained above. More precisely, we gradually increase $n$ from the initial value $100$ with increment step size $100$, namely $n= 100*step$, until the following is satisfied:\\
\smallskip
\textbf{Stopping condition.} 
For all $1\leq k\leq 30$ with five consecutive $step$, it holds that
\begin{equation}\label{eq:threshold}
	r_k^{(step)}= \frac{\left|\lambda_k^{(step)}-\lambda_k^{(step-1)}\right|}{\left|\lambda_k^{(step)}\right|}
	 <10^{-5},
\end{equation}
where $\lambda_k^{(step)}$ denotes the $k$-th eigenvalue of $\left[\Kcal^*_{\p\Om}\right]_n$ with $n=100*step$.
Even if the above mentioned condition does not hold, we terminate the calculation at $step = 16$ to avoid excessive computing time and accept the computed value $\lambda_k^{(16)}$ for $k$ satisfying the relative error condition \eqref{eq:threshold} for all $step =12,\dots,16$.

\smallskip

Figure \ref{FigErrors} shows the maximum relative error, $\max_{1\leq k\leq 30} r_k^{(step)}$, against $step=2,\dots,16$ for the smooth domain given by its exterior conformal mapping
\beq\label{Om:ex0}
\Psi(z) =z+ 0.01 z^{-1} + 0.07 z^{-2} + 0.01z^{-3} + 0.03 z^{-4} + 0.05 z^{-5} + 0.07 z^{-6}
\eeq
with $\gamma=1$.   
 The graph demonstrates that the relative error, $r_k^{(step)}$, decays at least exponentially, and the stopping condition is met when $step=11$ (i.e., $n=1100$). 
Numerical experiments show that such ``spiky" domains tend to exhibit slower convergence for $\lambda_k^{(step)}$. Other domains in the examples below show faster convergence compared to that in Figure \ref{FigErrors}. 
\begin{figure}[h!] \centering
\hskip -1.5cm
	\begin{subfigure}{0.3\linewidth}	
	
	\includegraphics[width=150pt, height=140pt]{./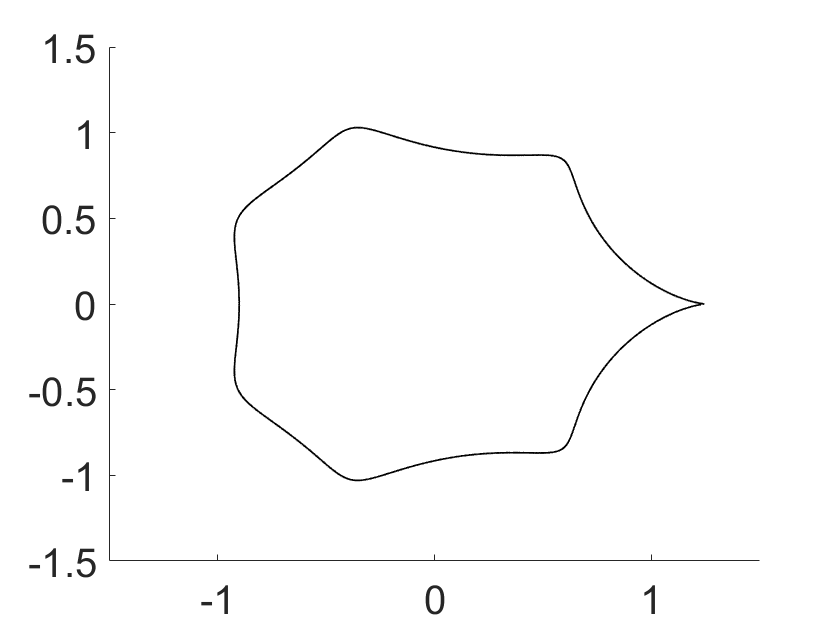}
	\end{subfigure}
	\hskip .3cm
	\begin{subfigure}{0.4\linewidth}	
	\includegraphics[width=250pt, height=140pt]{./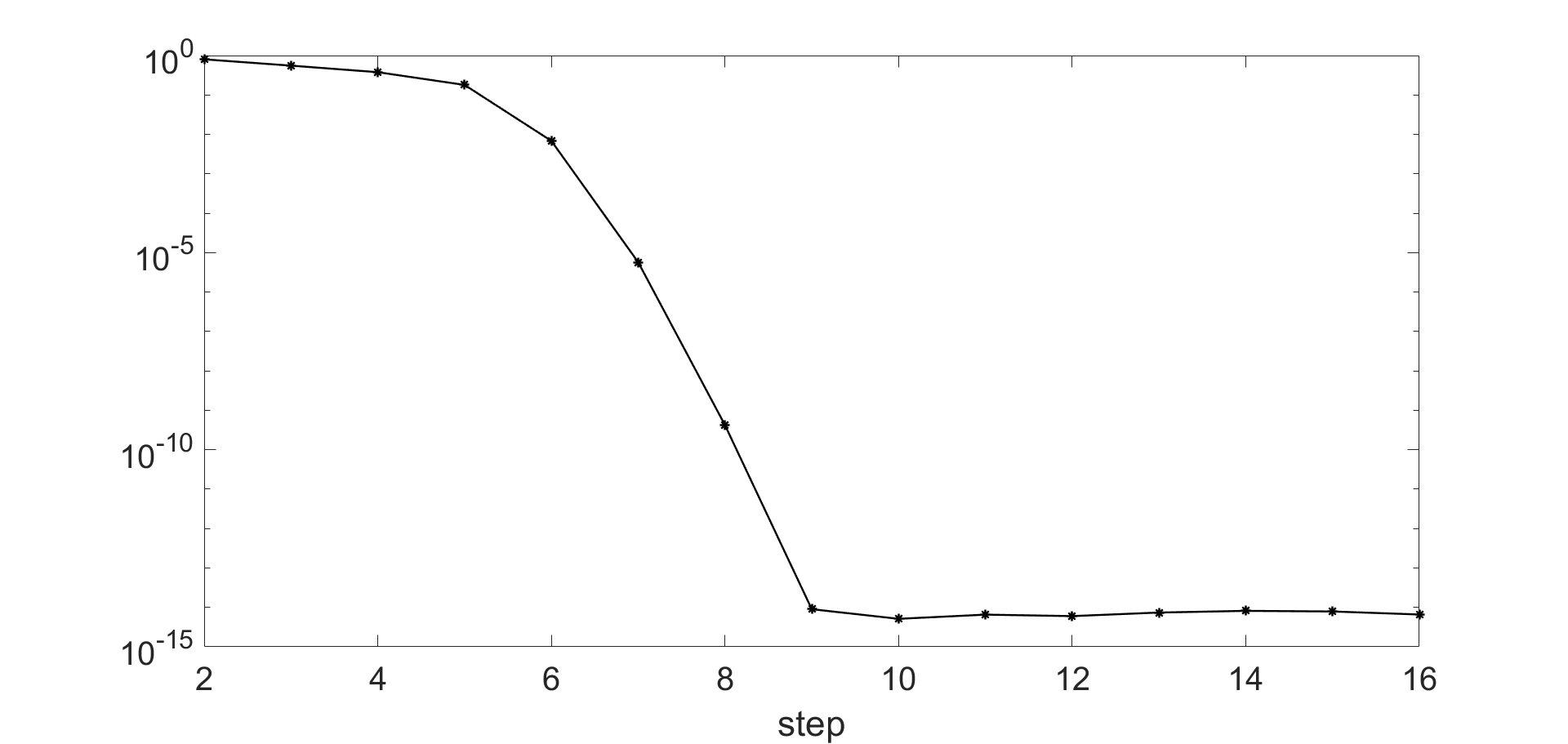}
	\end{subfigure}
\caption{The maximum relative error, $\max_{1\leq k\leq 30} r_k^{(step)}$, against $step=2,\dots,16$ in log scale (right figure) for $\Om$ given by \eqnref{Om:ex0} (left figure). 
}\label{FigErrors}
\end{figure}

Note that for a smooth domain, $\lambda_k$ exponentially decays as $k$ increases (\cite{Ando:2018:EDE,Jung:2020:DEE}), e.g., $\lambda_k=\pm \frac 1 2 \left(\frac{a-b}{a+b}\right)^k$ for an ellipse with major axis $a$ and minor axis $b$ are \cite{Ammari:2007:BIM}. 
Some eigenvalues with large $k$ tend to be small enough that they do not meet the stopping condition due to round-off errors.
It turns out that for such cases in the following examples, the last approximated value $\lambda^{(16)}_k$ satisfies 
$
 0<\lambda^{(16)}_k<2.6161 \times 10^{-13}.
$

\subsection{Examples}\label{subsec:examples}

As the following examples will illustrate, the monotony appearing in the above example again makes an appearance whenever the coefficients of $\Psi$ vary linearly in a parameter.
However, the spectral monotonicity does not hold in the general case where the coefficients vary non-linearly (see Remark \ref{remark:nonlin}).

\begin{example}\rm
 We take $\Omega=\Om_{m,s}$ to be the domain given by its exterior conformal mapping 
\beq\label{Psi:fourier}
\Psi(z) = z + \frac{s}{m}{z^{-m}},\quad m=1,\dots,6,\ s = 0.01,0.02, \dots, 0.99,
\eeq
with $\gamma=1$; $s$ is set to be smaller than $1$ for $\Psi$ with $|z|=1$ defining a Jordan curve. 
The Neumann--Poincar\'{e} eigenvalues  $\lambda_k$ are computed by the algorithm described above.

In Figure \ref{Fig:1to6}, in each sub-figure corresponding to one of the values of $m$, we plot the nine graphs of $\lambda_k$ associated with the domain $\Om_{m,s}$ with $s=0.1, 0.2, \cdots, 0.9$. 
Note that, without any exception, the graphs corresponding to larger $s$ are strictly above those corresponding to smaller $s$. That is, for each fixed $m$ and $k$, the $k$-th eigenvalue corresponding to $\Omega_{m,s}$ is strictly increasing in $s$.

Additionally, eigenvalues of the perturbation of a disk with a higher Fourier mode, $m$, tend to cluster; if $m$ is larger, then the ``clusters" get bigger (their size being $m+1$).
 This behavior accords with the analytic asymptotic results obtained in \cite{Ammari:2019:SRN}, where the eigenvalues of the NP operator of the algebraic domain with $\Psi(z)=z+\frac{\delta}{ z^{m}}$ ($\delta>0$ small) and $\gamma = 1$ are shown to satisfy
 \beq
 \begin{cases}
\ds\frac{\delta}{2} \times \left\{\, \pm\sqrt {1 \cdot m}, \, \pm\sqrt{2 \cdot (m-1)}, \,\cdots,\, \pm\sqrt {k \cdot k} \,\right\} + O(\delta ^2)   \quad&\mbox{if }m= 2k-1,\\[2mm]
\ds\frac{\delta}{2} \times \left\{\, \pm\sqrt {1 \cdot m}, \, \pm\sqrt{2 \cdot (m-1)},\, \cdots, \,\pm\sqrt {k \cdot (k+1)}\, \right\} + O(\delta ^2) 
\quad&\mbox{if }m= 2k.
\end{cases}
\eeq

\begin{figure}[p]
	\centering
	\begin{subfigure}[b]{0.49\linewidth}
		\centering\includegraphics[width=240pt, height=160pt]{./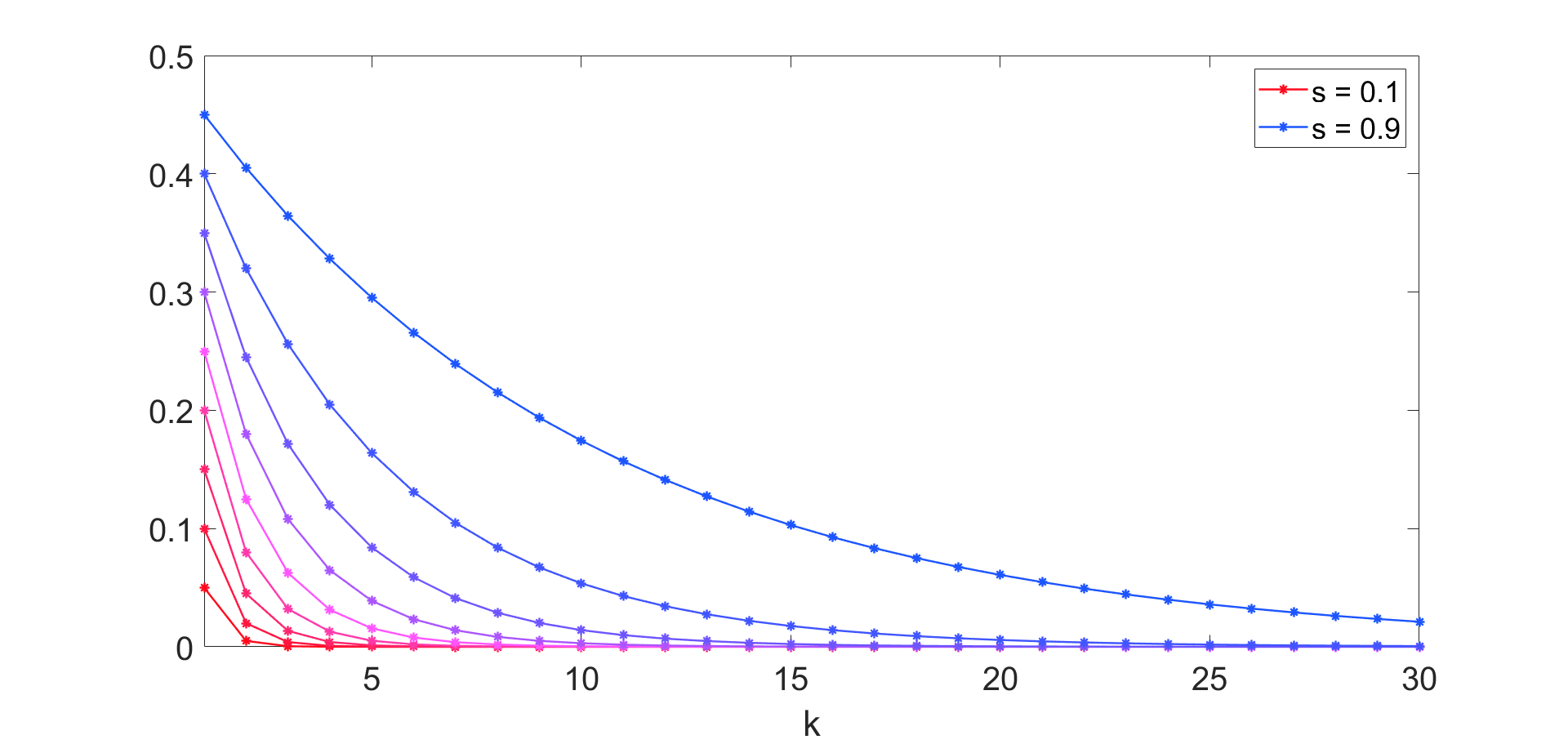}
		\caption{$m=1$\label{Fig1}}
	\end{subfigure}%
	\begin{subfigure}[b]{0.49\linewidth}
		\centering\includegraphics[width=240pt, height=160pt]{./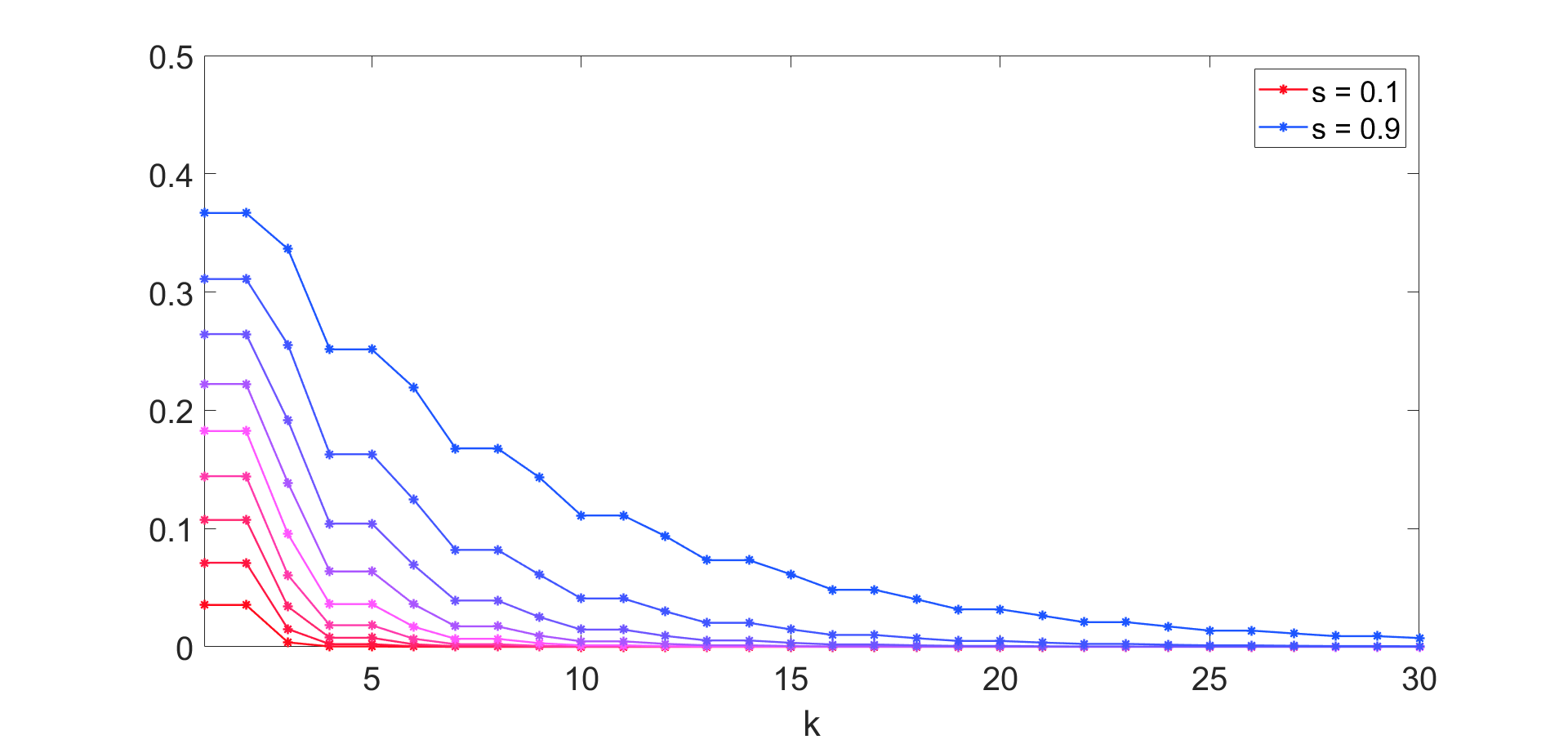}
		\caption{$m=2$\label{Fig2}}
	\end{subfigure}

\begin{subfigure}[b]{0.49\linewidth}
	\centering\includegraphics[width=240pt, height=160pt]{./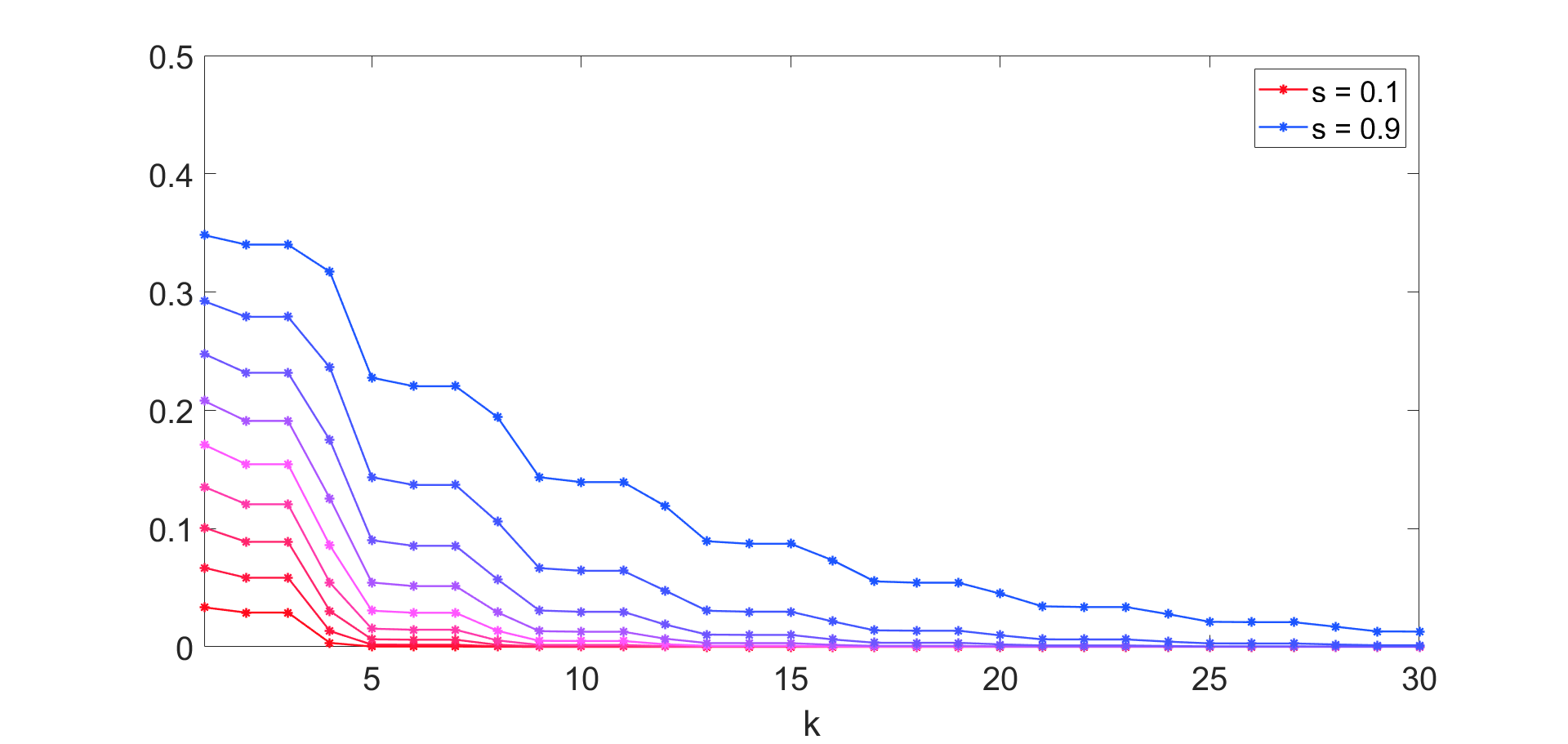}
	\caption{$m=3$\label{Fig3}}
\end{subfigure}%
\begin{subfigure}[b]{0.49\linewidth}
	\centering\includegraphics[width=240pt, height=160pt]{./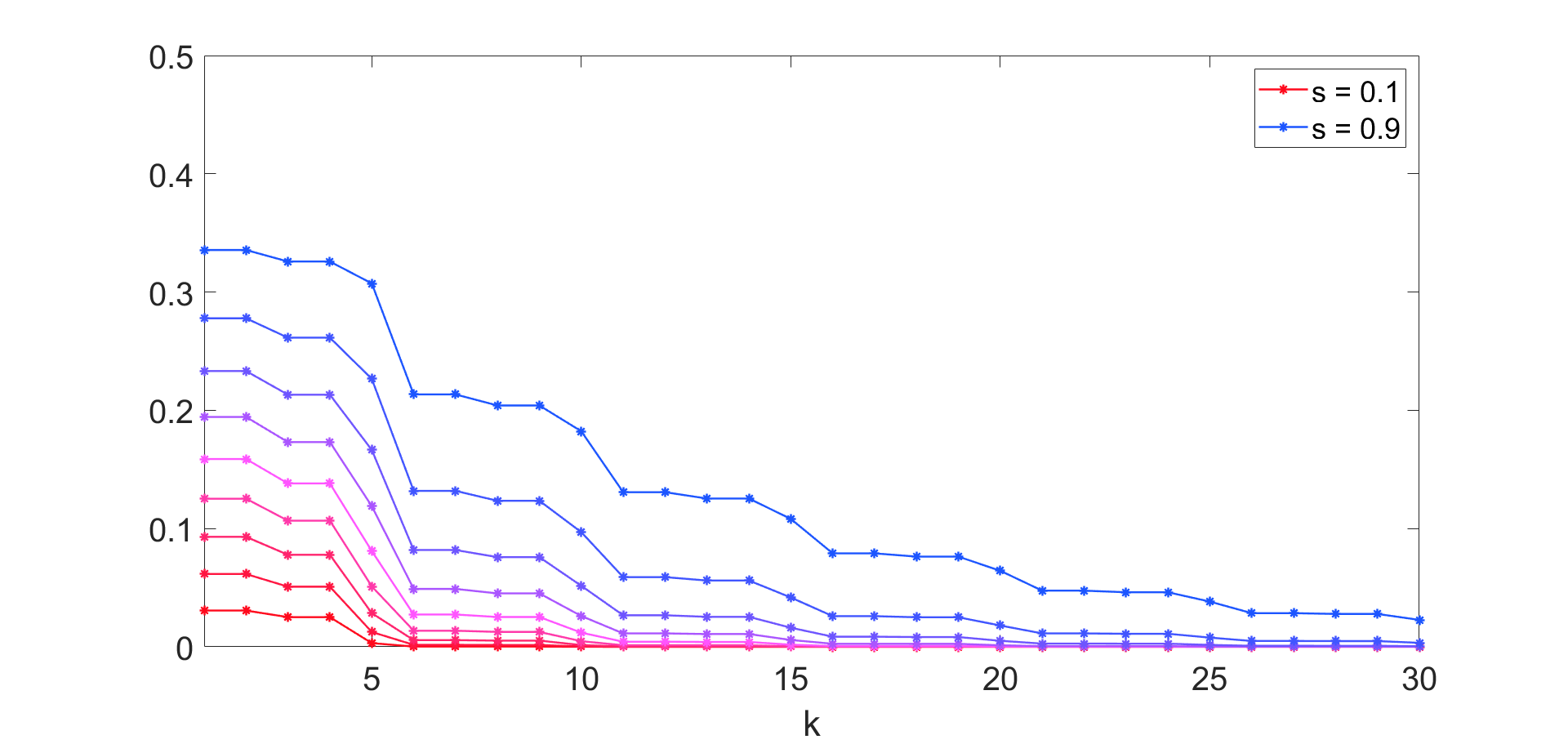}
	\caption{$m=4$\label{Fig4}}
\end{subfigure}

\begin{subfigure}[b]{0.49\linewidth}
	\centering\includegraphics[width=240pt, height=160pt]{./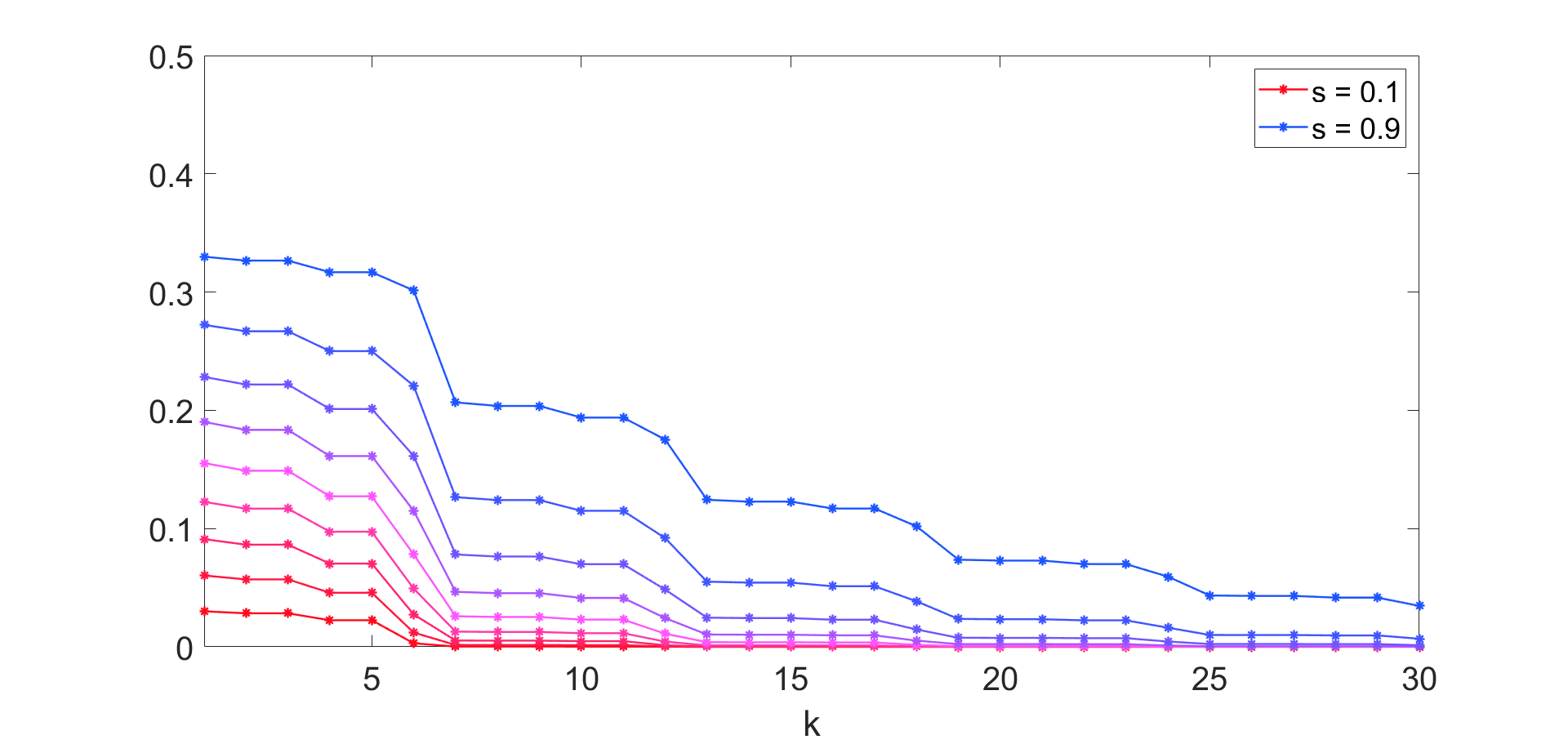}
	\caption{$m=5$\label{Fig5}}
\end{subfigure}
\begin{subfigure}[b]{0.49\linewidth}
	\centering\includegraphics[width=240pt, height=160pt]{./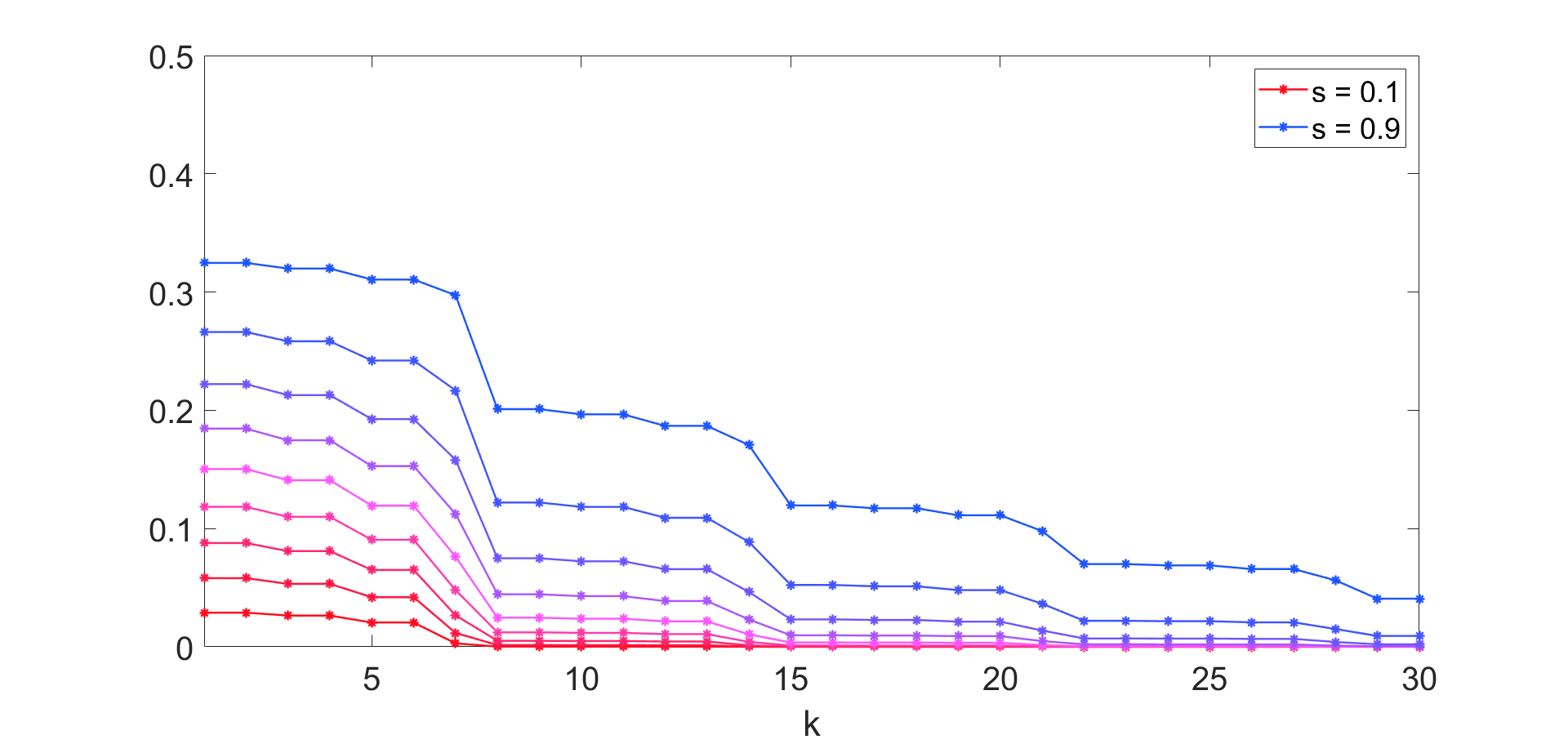}
	\caption{$m=6$\label{Fig6}}
\end{subfigure}

	\caption{ In each sub-figure corresponding to one of values of $m$, we plot the nine graphs of $\lambda_k$ associated with the domain $\Om_{m,s}$ given by \eqnref{Psi:fourier} with $s=0.1, 0.2, \cdots, 0.9$.  For each $k$, the eigenvalues have a monotonic behavior as the shape of the inclusion deviates from a disk (i.e., as $s$ increases). }
	\label{Fig:1to6}
\end{figure}
\end{example}

\begin{example}\rm
In this example, we take $\Om=\Omega_j$ to be the domain given by its exterior conformal map 
\beq\label{Psi:Ex2}
\Psi(z) = z + j\left(\frac 1 {600} z^{-1} + \frac 1 {300} z^{-2} + \frac 1 {1200} z^{-3} + \frac 1 {320} z^{-4}\right),\quad 1 \leq j \leq 42.
\eeq
For all $1 \leq j \leq 42$ and $1\leq k \leq 30$, the eigenvalues $\lambda_k$ were successfully computed by following the stopping criterion \eqnref{eq:threshold}. 
The obtained value of the $k$-th eigenvalue is strictly increasing in $j$ for each $k$ without any exception. 
Figure \ref{Fig13_14}\,(a) shows the nine graphs of $\lambda_k$ associated with the domain $\Om_j$ with $j=1,6,\dots,41$.
\end{example}
	
\begin{example}\rm
	We now take $\Om=\Omega_j$ to be the domain given by its exterior conformal map 
	
	\beq\label{Psi:Ex3}
	\Psi_j(z) = z + j\left( \frac 1 {400} z^{-1} + \frac 1 {600} z^{-2} + \frac 1 {1200} z^{-3} \right),\quad 1 \leq j \leq 119.
	\eeq
For $1 \leq j \leq 119$ and $1\leq k \leq 30$, the eigenvalues $\lambda_k$ were successfully computed by following the stopping criterion \eqnref{eq:threshold} except some cases, in which $\lambda^{(16)}_k$ satisfies $
 0<\lambda^{(16)}_k<2.6161 \times 10^{-13}.
$
The obtained value of $\lambda_k$ is strictly increasing in $j$, for all $k$ without any exception. 
Figure \ref{Fig13_14}\,(b) shows the graphs of $\lambda_k$ against $k$ for $\Om=\Om_j$ with $j=1,11,\dots,111$.
	 \begin{figure}[h!]
		\centering
		\begin{subfigure}{0.99\textwidth}
		\centering
		\includegraphics[scale=0.25]{./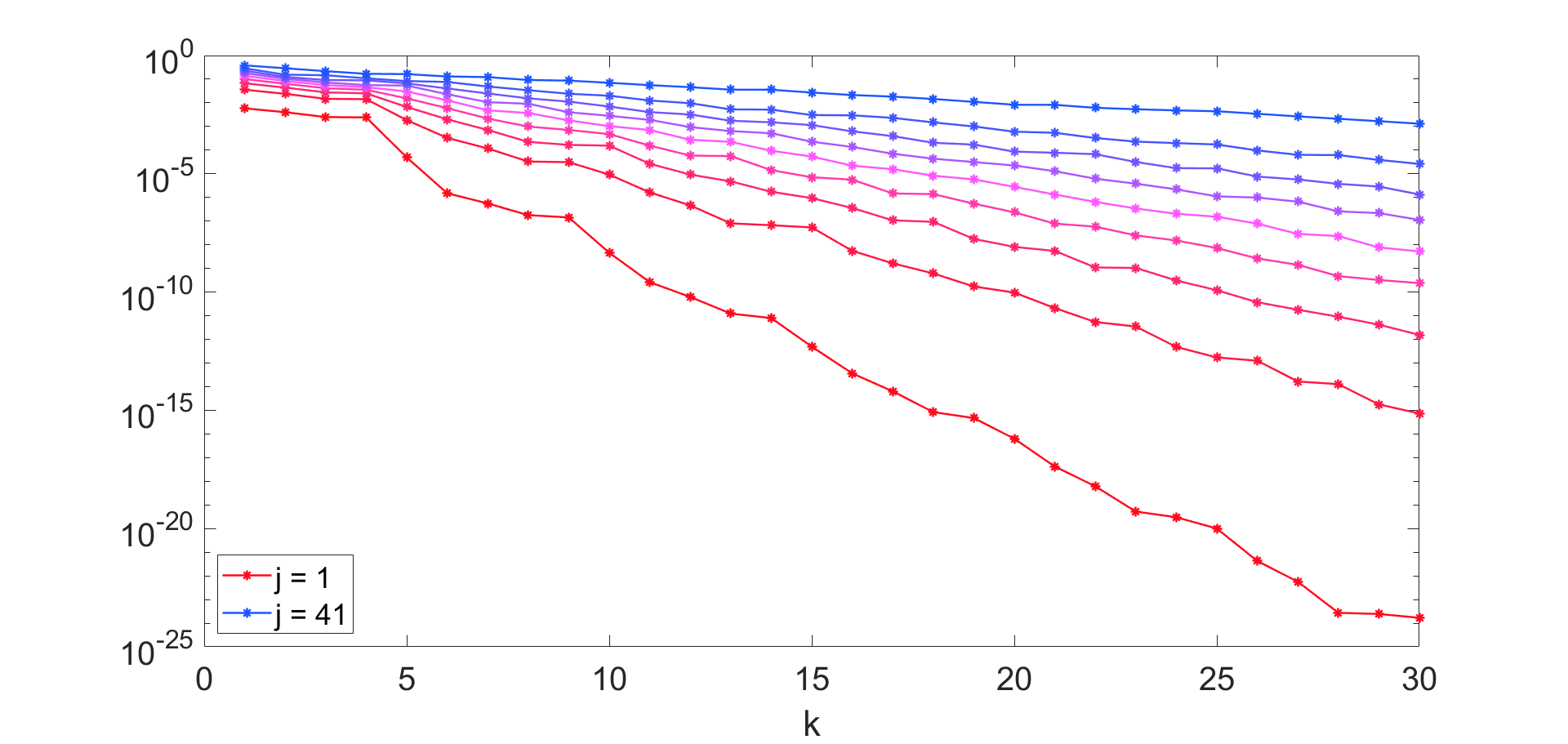}
		\caption{Eigenvalues corresponding to the domain given by \eqnref{Psi:Ex2} in log scale}
		\label{Fig13}
	\end{subfigure}
	
	\begin{subfigure}{0.99\textwidth}
	\centering
		\includegraphics[scale=0.25]{./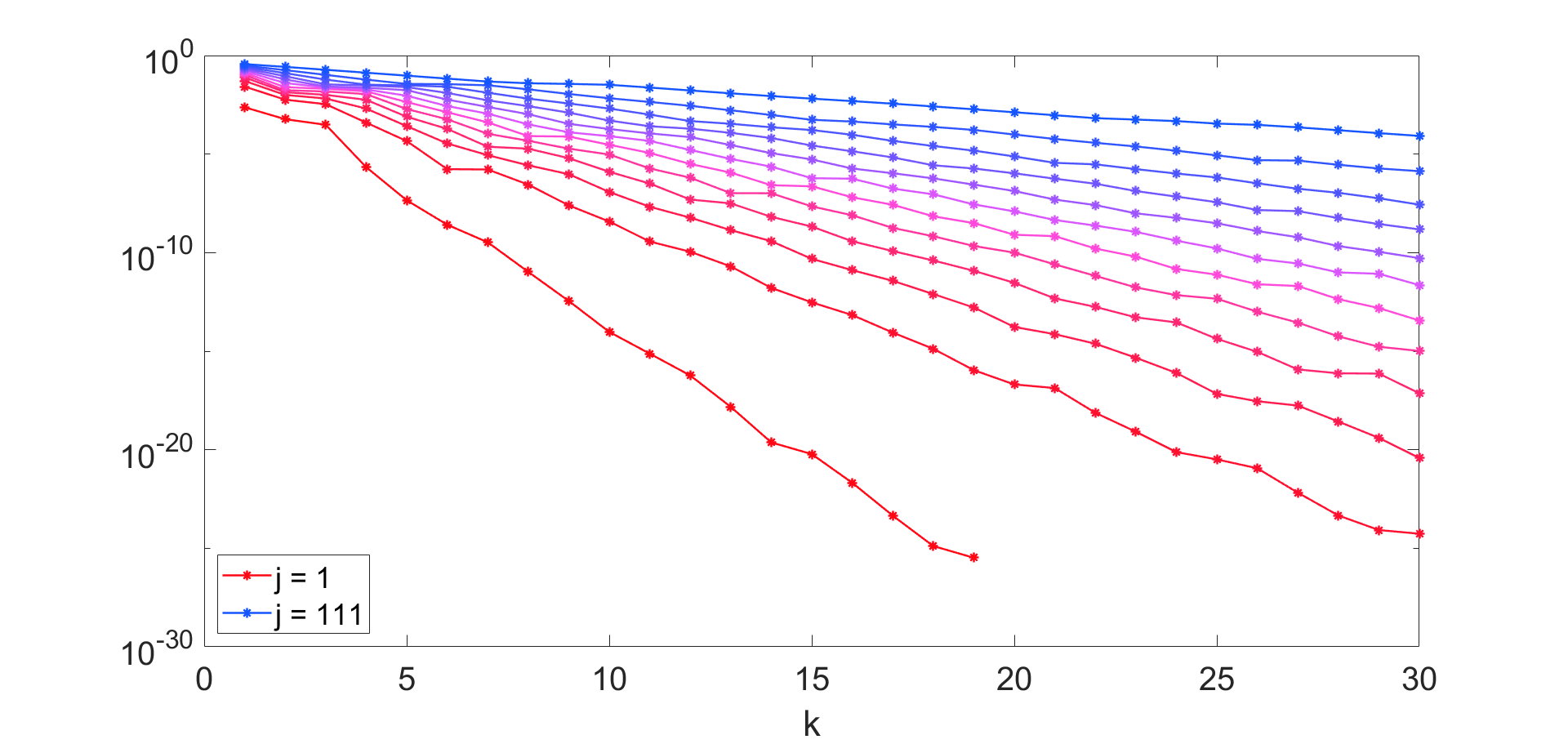}
		\caption{Eigenvalues corresponding to the domain given by \eqnref{Psi:Ex3} in log scale}
		\label{Fig14}
		\end{subfigure}
		\caption{ For each $k$, the eigenvalues have a monotonic behavior as the shape of the inclusion deviates from a disk (i.e., as $j$ increases).}\label{Fig13_14}
	\end{figure}
	\end{example}

\begin{remark}\label{remark:nonlin}
Consider $\Omega_j$, the domain given by its exterior conformal map $\Psi_j(z) = z+ \frac {j^2} {600} z^{-1} + \frac j {300} z^{-2}$. It turns out by the numerical computation that $\lambda_2=4.7155 \cdot 10^{-3}$ for $j=5$ and $\lambda_2=4.4402 \cdot 10^{-3}$ for $j=6$. In other words,
$\lambda_2(\Om_5) > \lambda_2(\Om_6),$ which demonstrates that the spectral monotonicity does not hold in the general case where the coefficients non-linearly vary.		
\end{remark}

\section{Conclusion}\label{ref:conclusion}
We investigated the geometric dependence of properties of composite materials based on the series expansions of the layer potential operators obtained in \cite{Jung:2018:SSM}. 
We obtained an explicit relation between the trace of the polarization tensor and the exterior conformal mapping corresponding to a planar inclusion with extreme or near-extreme conductivity.  
We then derived an explicit asymptotic for the effective conductivity of cylindrical periodic conductivity composites. 
Also, we observed a monotonic behavior of the spectrum of the NP operator when the shape of the inclusion linearly varies from a disk, by numerically computing eigenvalues of the NP operator for various example shapes of inclusions. 
Additionally, we derived inequality relations between the coefficients of the Riemann mapping associated with a general simply connected bounded Lipschitz domain by employing the properties of the PT corresponding the domain with extreme conductivity. It will be an interesting challenge to extend these results to inclusions with arbitrary conductivities.

\ifx \bblindex \undefined \def \bblindex #1{} \fi\ifx \bbljournal \undefined
  \def \bbljournal #1{{\em #1}\index{#1@{\em #1}}} \fi\ifx \bblnumber
  \undefined \def \bblnumber #1{{\bf #1}} \fi\ifx \bblvolume \undefined \def
  \bblvolume #1{{\bf #1}} \fi\ifx \noopsort \undefined \def \noopsort #1{} \fi

\end{document}